\newtheorem{thm}{Theorem}
\newtheorem{prop}[thm]{Proposition}
\newtheorem{lemma}[thm]{Lemma}
\newtheorem{claim}{Claim}
\newtheorem*{claimnonumber}{Claim}
\newtheorem{rmk}[thm]{Remark}
\newtheorem*{rmknonumber}{Remark}
\newtheorem*{qnn}{Question}
\newenvironment{pf}[1][Proof.]{\noindent \emph{#1.}}{}
\newcommand\BAR[1]{\overline{#1}}
\newcommand\R{\mathbb{R}}
\newcommand\C{\mathbb{C}}
\newcommand\N{\mathbb{N}}
\newcommand\al{\alpha}
\newcommand\Om{\Omega}
\newcommand\om{\omega}
\newcommand\eps{\epsilon}
\renewcommand\phi{{\varphi}}
\newcommand\sub{\subseteq}
\newcommand\x{\times}
\newcommand\wo{\setminus}
\newcommand\coiso{{\operatorname{coiso}}}
\newcommand\Lag{{\operatorname{Lag}}}
\newcommand\D{\mathbb{D}}
\newcommand\id{\operatorname{id}}
\newcommand\wt[1]{\widetilde{#1}}
\newcommand\M{\mathcal{M}}
\newcommand\T{\mathbb{T}}
\newcommand\dd{\partial}
\newcommand\HH{\mathcal{H}}
\newcommand\RE{\operatorname{Re}}
\newcommand\IM{\operatorname{Im}}
\newcommand\Z{\mathbb{Z}}
\newcommand\im{\operatorname{im}}
\newcommand\LL{\mathcal{L}}
\newcommand\iinj{\hookrightarrow}
\title[Non-Squeezable Small Set, Regular Coisotropic Capacity]{A Symplectically Non-Squeezable Small Set and the Regular Coisotropic Capacity}
\author[Jan Swoboda]{Jan Swoboda}
\address{Max--Planck--Institut f\"ur Mathematik\\Vivatsgasse 7\\D-53111 Bonn\\Germany}
\email{swoboda@mpim-bonn.mpg.de}
\author[Fabian Ziltener]{Fabian Ziltener}
\address{Korea Institute for Advanced Study\\Hoegiro 87 (207-43 Cheongnyangni-dong)\\Dongdaemun-gu\\Seoul 130-722\\Republic of Korea}
\email{fabian@kias.re.kr}
\date{\today}
\begin{document}

\maketitle

\begin{abstract} 
We prove that for $n\geq2$ there exists a compact subset $X$  of the closed ball in $\R^{2n}$ of radius $\sqrt{2}$, such that $X$ has Hausdorff dimension $n$ and does not symplectically embed into the standard open symplectic cylinder. The second main result is a lower bound on the $d$-th regular coisotropic capacity, which is sharp up to a factor of $3$. For an open subset of a geometrically bounded, aspherical symplectic manifold, this capacity is a lower bound on its displacement energy. The proofs of the results involve a certain Lagrangian submanifold of linear space, which was considered by M.~Audin and L.~Polterovich.
\end{abstract}
\section{Motivation and results}\label{sec:main}
Continuing our previous work \cite{SZSmall,SZHofer}, the present article is motivated by the following question.
\begin{qnn}[A] How much symplectic geometry can a small subset of a symplectic manifold carry?
\end{qnn}
More concretely, we are concerned with the problem of finding a small subset of $\R^{2n}$ that cannot be squeezed symplectically. To be specific, we interpret ``smallness'' in two ways: in the sense of Hausdorff dimension and in terms of the size of a ball containing the subset. The first main result is the following. Let $(M,\om)$ and $(M',\om')$ be symplectic manifolds, and $X\sub M$ a subset. We say that \emph{$X$ (symplectically) embeds into $M'$} iff there exists an open neighborhood $U\sub M$ of $X$ and a symplectic embedding $\phi\colon U\to M'$. For $n\in\N$ and $a>0$ we denote by $B^{2n}(a)$ and $\BAR B^{2n}(a)$ the open and closed balls in $\R^{2n}$, of radius $\sqrt{a/\pi}$, around 0. (These balls have Gromov-width $a$.) We denote
\begin{eqnarray*}&B^{2n}:=B^{2n}(\pi),\quad\BAR B^{2n}\coloneqq\BAR B^{2n}(\pi),\quad\D\coloneqq\BAR B^2,&\\
&Z^{2n}(a)\coloneqq B^2(a)\x\R^{2n-2},\quad Z^{2n}\coloneqq Z^{2n}(\pi),&\\
&\BAR P_n:=\left\{\begin{array}{ll}
\D^n,&\textrm{if }n\textrm{ is even,}\\
\D^{n-1}\x\R^2,&\textrm{if }n\textrm{ is odd.}
\end{array}\right.&
\end{eqnarray*}
\begin{thm}[Non-squeezable small set]\label{thm:X} For every $n\geq2$ there exists a compact subset 
\[X\sub\BAR P_n\cap\BAR B^{2n}(2\pi)\]
of Hausdorff dimension $n$, which does not symplectically embed into the open cylinder $Z^{2n}$. In fact, we may choose this set to be the union of a closed\footnote{This means ``compact and without boundary''.} Lagrangian submanifold and the image of a smooth map from $S^2$ to $\R^{2n}$.\footnote{It follows from the hypothesis $n\geq2$ and standard arguments (cf.~\cite[p.~176]{Fe}) that such a union has Hausdorff dimension equal to $n$.}
\end{thm}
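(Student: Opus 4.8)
The plan is to produce $X$ as an explicit union $X=L\cup\Sigma$, where $L\sub\C^n=\R^{2n}$ is a closed monotone Lagrangian submanifold — the torus-type submanifold considered by Audin and Polterovich — rescaled so as to sit inside $\BAR P_n\cap\BAR B^{2n}(2\pi)$, and $\Sigma$ is the image of a smooth map $S^2\to\R^{2n}$ attached to $L$ along a distinguished relative class. Before touching the non-squeezing, I would dispose of the two soft assertions. The dimension statement is immediate from the footnote: since $n\geq2$ we have $\dim\Sigma=2\leq n=\dim L$, so $\dim_H X=n$. Containment is a direct coordinate estimate — one checks $|z_i|\leq1$ for every $i$ and $\sum_i|z_i|^2\leq2$ at every point of $L$ and of $\Sigma$ — and this fixes the precise scale of the construction.

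The heart of the proof is the non-squeezing, which I would argue by contradiction. Assume there is an open neighborhood $U\sub\R^{2n}$ of $X$ and a symplectic embedding $\phi\colon U\to Z^{2n}$. Then $\phi(L)$ is a closed Lagrangian submanifold of the open cylinder $Z^{2n}=B^2(\pi)\x\R^{2n-2}$; being compact, it lies in a sub-cylinder of width strictly less than $\pi$. The key input is a Gromov-type existence theorem: such a Lagrangian bounds a non-constant $J$-holomorphic disk $u$, for suitable compatible $J$, whose symplectic area is at most the width and whose Maslov index is pinned (say $\mu(u)=2$) by the monotonicity. Since $\phi$ preserves the symplectic area of the underlying relative class, pulling back yields a class $A\in\pi_2(\R^{2n},L)$ with $0<\om(A)<\pi$ and the same index constraint. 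The role of the sphere $\Sigma$ here is to guarantee that this obstructing disk actually exists — it furnishes the nontrivial homotopy/enumerative input that makes the relevant disk count nonzero — and, simultaneously, to calibrate the Liouville class $[\lam|_L]\in H^1(L;\R)$ of the rescaled $L$.

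The contradiction then comes from area bookkeeping. By monotonicity $\om(A)$ is a fixed positive multiple of $\mu(A)$, and the construction is tuned so that every relative class of the allowed index has $\om\geq\pi$; this contradicts $0<\om(A)<\pi$. I expect two main obstacles. First, the holomorphic-disk existence theorem must be established in the non-compact cylinder: one needs Gromov compactness with Lagrangian boundary while excluding the loss of energy into the $\R^{2n-2}$ factor and controlling disk and sphere bubbling, so that a disk of area below the width genuinely survives. Second — and this is the crux — one must arrange that the minimal positive disk area of $L$ reaches $\pi$ while $L$ still fits in the ball of radius $\sqrt2$; a naive product (Clifford) torus gives minimal Maslov-$2$ area only $2\pi/n\leq\pi$, so it is precisely the non-product geometry of the Audin–Polterovich Lagrangian together with the capping sphere $\Sigma$ that is needed to meet the width bound.
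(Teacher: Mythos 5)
Your proposal reproduces the paper's cast of characters (the Audin--Polterovich Lagrangian rescaled by $\sqrt2$, a sphere attached to it, and a holomorphic-curve input), but it has a genuine gap at the decisive step. You write that ``$\phi$ preserves the symplectic area of the underlying relative class'' and then do area bookkeeping with the pulled-back class. This is not automatic, and in fact it is false without further input: a disk in $Z^{2n}$ with boundary on $\phi(L)$ need not lie in $\phi(U)$, so it cannot be pulled back, and the area homomorphism on $\pi_1(L)$ is \emph{not} an invariant of neighborhood embeddings --- the paper itself points out that any closed Lagrangian alone embeds symplectically into an arbitrarily small ball (Weinstein neighborhood plus rescaling), which rescales the whole area spectrum. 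The entire purpose of the sphere in the paper is to repair exactly this: $u(S^2)$ is the union of the cones $\{\,t\,y_i(z)\,\}$ over smooth loops $y_1,\dots,y_k$ generating $\pi_1(L)$, so every loop of $L$ is contractible in \emph{every} open neighborhood $U$ of $X=L\cup u(S^2)$; then, since $\R^{2n}$ is aspherical, gluing a disk in $U$ to a disk in the target shows that $A(\R^{2n},\om_0,\phi(L))=A(U,\om_0|U,L)=A(\R^{2n},\om_0,L)$ (the paper's Lemma \ref{le:A N}). Your stated roles for $\Sigma$ --- ``making the disk count nonzero'' and ``calibrating the Liouville class'' --- do not supply this mechanism: the disk-existence input concerns the closed Lagrangian $\phi(L)$ by itself and has nothing to do with $\Sigma$, and the ``calibration'' is precisely the unproved assertion. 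The same objection applies to your use of monotonicity of $\phi(L)$, which is likewise not preserved under neighborhood embeddings.

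Two further points. First, once area preservation is established, your Maslov-index discussion is superfluous and partly unavailable: the minimal Maslov number of this $L$ is $n$, not $2$, but no index is needed --- a class with $0<\om<\pi$ already contradicts the fact that the area spectrum of the rescaled Lagrangian is $\pi\Z$ (the paper's Lemma \ref{lem:minimalareaL} gives spectrum $\frac\pi2\Z$ before rescaling). For the same reason the paper can quote Chekanov's theorem $e(L',\R^{2n},\om_0)\geq A(\R^{2n},\om_0,L')$ together with the elementary bound $e(Z^{2n}(a),\R^{2n},\om_0)\leq a$, thereby avoiding your ``first obstacle'' (Gromov compactness in the noncompact cylinder) entirely. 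Second, your containment check fails as stated: a point $\sqrt2\,zq$ of the naively rescaled Lagrangian, with $q\in S^{n-1}\sub\R^n$, can have a coordinate of modulus $\sqrt2>1$, so $\sqrt2 L\not\sub\D^n$. The paper first applies a unitary map $U$ carrying $\R^n$ to the Lagrangian subspace $\{w\in\C^n\mid w_{n+1-j}=\BAR w_j\}$; only the rotated set $\wt L=\sqrt2\,UL$ satisfies $|\wt w_j|\leq1$ for $j\neq\frac{n+1}2$, and for $n$ odd one must additionally permute coordinates to land in $\D^{n-1}\x\R^2=\BAR P_n$.
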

The set $X$ in this result is ``almost minimal'': If $n$ is even then the statement of Theorem \ref{thm:X} is wrong, if $\BAR P_n$ is replaced by $(\D\wo\{z\})\x\D^{n-1}$, where $z$ is an arbitrary point in $S^1=\dd\D$. This follows from an elementary argument, using compactness of $X$ and Moser isotopy in two dimensions. (A similar assertion holds in the case in which $n$ is odd.) Furthermore, the condition $X\sub\BAR B^{2n}(2\pi)$ is ``sharp up to a factor of 2''. In fact, based on a two-dimensional Moser type argument, we will show the following:
\begin{prop}\label{prop:2n-1} For $n\in\N$ every compact subset of $\BAR B^{2n}$ with vanishing $(2n-1)$-dimensional Hausdorff measure symplectically embeds into $Z^{2n}$. 
\end{prop}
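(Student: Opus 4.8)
The plan is to reduce the problem to pushing $X$ inward near a single ``bad'' circle, and then to realize this push by a symplectomorphism that offloads area into the unbounded directions of the cylinder.

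First I would identify the obstruction. Write a point of $\R^{2n}$ as $(z,w)$ with $z=(x_1,y_1)\in\R^2$ and $w=(x_2,\dots,y_n)\in\R^{2n-2}$, and set $C:=\{(z,0):|z|=1\}$. If $(z,w)\in\BAR B^{2n}$ satisfies $|z|\ge 1$, then $|z|=1$ and $w=0$, so $\BAR B^{2n}\wo C\sub Z^{2n}$. Consequently, if $X\cap C=\emptyset$ then $X$ is a compact subset of the open set $Z^{2n}$, and the inclusion of a neighborhood $U$ of $X$ with $\BAR U\sub Z^{2n}$ is the desired symplectic embedding. It therefore suffices to construct a symplectomorphism $\Phi$ of a neighborhood of $\BAR B^{2n}$ onto its image with $\Phi(X)\sub Z^{2n}$, i.e.\ one that strictly decreases the first-plane radius $|z|$ on $X\cap C$ while keeping $|z|<1$ on the rest of $X$.

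Next comes the core two-dimensional construction. Away from $\{z=0\}$ introduce action--angle coordinates $I:=\tfrac12|z|^2$, $\theta:=\arg z$, in which $\om=dI\wedge d\theta+\om'$ with $\om'=\sum_{i\ge2}dx_i\wedge dy_i$. On $\BAR B^{2n}$ we have $I+\tfrac12|w|^2\le\tfrac12$, so the part of $X$ with $I$ near $\tfrac12$ is confined to a thin tube around $C$, where $w\approx0$. I would build a Hamiltonian isotopy, generated by a function $H(I,\theta,w)$ supported in such a tube, whose time-one map decreases $I$ below $\tfrac12$ on this part of $X$. In the $(I,\theta)$-annulus this is an area-preserving ``inward push'' of the rim; the area it displaces cannot remain in the annulus (area is a two-dimensional invariant), so the flow must transport it into the region $w\ne0$. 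This is admissible precisely because the target cylinder $Z^{2n}$ is unbounded in $w$: there is infinite room to receive the displaced material. Choosing $H$ so that different angular sectors are lifted to different heights in $w$ realizes the push as a genuine, injective symplectic embedding of a neighborhood of the rim.

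The role of the hypothesis, and the main obstacle, appear together here. Pure area-preservation in the first plane cannot work: since $n\ge2$ the projection of $X$ to the $z$-plane may be all of $\D$, so no area-preserving map of that plane alone can move $X$ off $C$. The escape into the $w$-directions is what the vanishing of $\mathcal{H}^{2n-1}(X)$ makes safe: as orthogonal projection does not increase Hausdorff measure, the image of $X$ under the projection forgetting one coordinate is Lebesgue-null in $\R^{2n-1}$, which lets me choose the lifting heights so that the pushed neighborhood remains embedded and inside the cylinder, with no Gromov-width obstruction (a neighborhood of $X$ can be taken thin enough to avoid containing a ball of capacity exceeding $\pi$). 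I expect the main difficulty to lie exactly at this point: arranging the inward push to be simultaneously (a) symplectic rather than merely volume-preserving and (b) injective on a full neighborhood of $X$, given that $X\cap C$ may be all of $C$. Reconciling these two requirements --- which is what forces both the use of the unbounded $w$-directions and the thinness hypothesis --- is the heart of the argument; the remaining verifications (that $H$ may be chosen smooth and compactly supported, and that its time-one map sends a neighborhood of $X$ into $Z^{2n}$) are routine.
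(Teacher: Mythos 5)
Your opening reduction is fine: $\BAR B^{2n}\setminus C\sub Z^{2n}$ for the circle $C=\{(z,0):|z|=1\}$, so only the part of $X$ near $C$ is at issue. But from that point on the argument has a genuine gap: the Hamiltonian $H$ whose time-one map is supposed to carry $X$ off $C$ is never constructed, and you yourself flag this step as ``the heart of the argument.'' The difficulty is not one of routine verification. Note first that, by Gromov non-squeezing, no symplectomorphism $\Phi$ of a neighborhood of $\BAR B^{2n}$ that is the identity outside a tube $T$ around $C$ can send \emph{all} of $\BAR B^{2n}\cap T$ into $Z^{2n}$: otherwise $\Phi(\BAR B^{2n})\sub Z^{2n}$, hence by compactness $\Phi$ would embed a ball $B^{2n}(\pi+\delta)$ into $Z^{2n}=Z^{2n}(\pi)$. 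So any admissible $\Phi$ must selectively push a neighborhood of $X\cap T$ inward while necessarily leaving other rim points outside the cylinder; the hypothesis $\mathcal H^{2n-1}(X)=0$ must therefore enter the construction of $H$ in an essential, quantitative way. Your proposed mechanism --- lifting angular sectors to different $w$-heights, with heights chosen because a projection of $X$ to $\R^{2n-1}$ is Lebesgue-null --- is not connected to this requirement by any actual argument. Injectivity is a red herring (the time-one map of a flow is automatically injective where defined); the real issue is guaranteeing $|z|<1$ at time one on \emph{all} of $X$, whose points near $C$ get moved by the same flow, and nullity of a projection is never converted into an estimate achieving this. (Also, for $n=1$ there are no $w$-directions at all, so your mechanism is vacuous there, while the proposition is stated for all $n\in\N$.) Constructions that trade area in one plane for room in the fibers do exist (symplectic folding), but they are elaborate, and nothing in the sketch adapts them to an arbitrary compact null set.

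The missing idea is much simpler, and it is exactly what your remark ``pure area-preservation in the first plane cannot work'' walks past: before projecting, one may rotate by a \emph{unitary} map of $\R^{2n}$ (orthogonal and symplectic), mixing the $z$- and $w$-coordinates. Since $\mathcal H^{2n-1}(X)=0$, $X$ cannot contain $S^{2n-1}$, so it misses some point of the unit sphere; as $U(n)$ acts transitively on $S^{2n-1}$, there is a unitary $\Psi$ with $(1,0,\ldots,0)\notin\Psi(X)$. Compactness and $\Psi(X)\sub\BAR B^{2n}$ then give $c<1$ with $\Psi(X)\sub\{(q,p)\in\D\mid q<c\}\x\R^{2n-2}$, and the planar set $\{(q,p)\in\D\mid q<c\}$ has area $<\pi$. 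A neighborhood $U$ of it of area $<\pi$ embeds area-preservingly into the open disc $B^2$ by a Moser/Greene--Shiohama argument, and $(\phi\x\id_{\R^{2n-2}})\circ\Psi$ is the desired symplectic embedding into $Z^{2n}$. This is the paper's proof: the global rotation removes precisely the obstruction (the first-plane projection of $X$ filling all of $\D$) that forced you into the unfinished folding-type construction.
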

In the proof of Theorem \ref{thm:X} we will consider a rotated and rescaled version $\wt L$ of a closed Lagrangian submanifold studied by L.~Polterovich in \cite{Polterovich}. We will choose a map from $S^2$ to $\R^{2n}$ with image equal to the union of the cones over some loops in $\wt L$ that generate the fundamental group of $\wt L$. The union $X$ of $\wt L$ and these cones cannot be squeezed into $Z^{2n}$. This will be a consequence of a result by Y.~Chekanov about the displacement energy of a Lagrangian submanifold.

We may ask whether the condition in Theorem \ref{thm:X} on the Hausdorff dimension of $X$ is optimal:
\begin{qnn}[B] Does every compact set $X\sub\R^{2n}$ of Hausdorff dimension $<n$ symplectically embed into an arbitrarily small symplectic cylinder or ball? Is this even true for any compact set $X$ with vanishing $n$-dimensional Hausdorff measure?
\end{qnn}
To our knowledge these questions are open.\\

Returning to Question (A), consider the class of ``small'' subsets of a given symplectic manifold consisting of coisotropic submanifolds. Based on these submanifolds, in \cite{SZSmall} for a fixed dimension $2n$ we defined a collection of capacities, one for each $d\in\{n,\ldots,2n-1\}$, as follows. Recall that a symplectic manifold $(M,\om)$ is called \emph{(symplectically) aspherical} iff for every $u\in C^\infty(S^2,M)$ we have $\int_{S^2}u^*\om=0$. For a coisotropic submanifold $N\sub M$ we denote by $A(N)=A(M,\om,N)$ its minimal (symplectic) area (or action). (See (\ref{eq:A N}) below.) We define the \emph{$d$-th regular coisotropic capacity} to be the map
\begin{eqnarray}\label{eq:A d coiso}&A^d_\coiso\colon\big\{\textrm{aspherical symplectic manifold, }\dim M=2n\big\}\to[0,\infty],&\\
\nonumber&A^d_\coiso(M,\om)\coloneqq\sup A(N),&
\end{eqnarray}
where $N\sub M$ runs over all non-empty closed regular (i.e., ``fibering'') coisotropic submanifolds of dimension $d$, satisfying the following condition:
\begin{equation}\label{eq:contractible}\forall \textrm{ isotropic leaf }F\textrm{ of }N,\,\forall x\in C(S^1,F)\colon\,x\textrm{ is contractible in }M.
\end{equation}
(For explanations see Subsection \ref{subsec:back}.) By \cite[Theorem 4]{SZSmall} the map $A^d_\coiso$ is a (not necessarily normalized) symplectic capacity. For $d=n$ we abbreviate
\[A_\Lag\coloneqq A^n_\coiso.\]
Since every Lagrangian submanifold is regular, $A_\Lag(M,\om)$ equals the supremum of all minimal areas $A(L)$, where $L$ runs over all those non-empty closed Lagrangian submanifolds of $M$, for which every continuous loop in $L$ is contractible in $M$. (Here $A(L)=\inf\big(S(L)\cap(0,\infty)\big)$, where the symplectic area spectrum $S(L)$ is given by (\ref{eq:S L}) below.)

Our second main result provides a lower bound on $A_\coiso^d$ for the unit ball $B^{2n}$, equipped with the standard symplectic form $\om_0$:
\begin{thm}[Regular coisotropic capacity]\label{thm:coiso} For every $n\geq2$ we have
\begin{eqnarray}
\label{thm:ineq1}&A_\Lag(B^{2n})\coloneqq A_\Lag(B^{2n},\om_0)\geq\frac\pi2,&\\
\label{thm:ineq2}&A^d_\coiso(B^{2n})\geq\frac\pi3,\quad\forall d\in\{n+1,\ldots,2n-3\}.&
\end{eqnarray}
\end{thm}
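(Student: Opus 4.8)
The plan is to exploit the fact that $A^d_\coiso(B^{2n})$ is defined as a \emph{supremum} of minimal areas $A(N)$ over admissible competitors $N$. Since $B^{2n}$ is contractible, condition (\ref{eq:contractible}) is automatically satisfied by every closed coisotropic $N\sub B^{2n}$, so a competitor is simply a non-empty closed regular coisotropic submanifold of the open ball (and every Lagrangian is regular). Consequently, to prove each inequality it suffices to exhibit \emph{one} such submanifold and to bound its minimal area \emph{from below} by the asserted constant. The real content is therefore a universal lower estimate over the positive part of the area spectrum of a single, explicitly chosen $N$ — not an estimate ranging over all competitors.

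For (\ref{thm:ineq1}) I would take $N=\wt L$, the rescaled Polterovich Lagrangian of Theorem \ref{thm:X}, which lies in $\BAR B^{2n}(2\pi)$ (radius $\sqrt2$). The input I need is its minimal area, $A(\wt L)=\pi$ — the same quantity that drives the non-squeezing there, computed directly from the geometry of $\wt L$; for this inequality even $A(\wt L)\geq\pi$ would suffice. Rescaling by a factor $\lambda$ with $2\lambda^2<1$ produces a closed Lagrangian $\lambda\wt L\sub B^{2n}$ with minimal area $\lambda^2 A(\wt L)=\lambda^2\pi$; letting $\lambda^2\nearrow\tfrac12$ and using that $A_\Lag$ is a supremum yields $A_\Lag(B^{2n})\geq\pi/2$.

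For (\ref{thm:ineq2}), given $d\in\{n+1,\dots,2n-3\}$ set $k:=d-n+1$, so that $k\in\{2,\dots,n-2\}$ and $n-k\geq2$. I would form the product $N:=\wt L\x S^{2k-1}\sub\C^{n-k}\x\C^k=\C^n$, where $\wt L\sub\BAR B^{2(n-k)}(2\pi)$ is the Polterovich Lagrangian and $S^{2k-1}:=\partial\BAR B^{2k}(\pi)$ is the unit sphere, whose Reeb (Hopf) circles bound disks of area $\pi$. This $N$ is a closed regular coisotropic submanifold of dimension $d$: its characteristic foliation has leaves $\wt L\x\ga$ (with $\ga$ a Hopf circle) and fibers over $\CP^{k-1}$. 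On such a leaf $\pi_1(\wt L\x\ga)=\pi_1(\wt L)\x\Z$, and the symplectic-area homomorphism maps the two factors into $S(\wt L)$ and into $\pi\Z$, respectively. Here $\wt L$ is \emph{monotone}, so $S(\wt L)$ is the cyclic group $A(\wt L)\Z=\pi\Z$; matching the Reeb action to $A(\wt L)=\pi$ forces the leafwise area spectrum to equal $\pi\Z$, whence $A(N)=\pi$. Since $N\sub\BAR B^{2n}(3\pi)$ (radius $\sqrt3$), rescaling by $\lambda$ with $3\lambda^2<1$ gives competitors $\lambda N\sub B^{2n}$ with $A(\lambda N)=\lambda^2\pi$, and $\lambda^2\nearrow\tfrac13$ yields $A^d_\coiso(B^{2n})\geq\pi/3$.

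The main obstacle is the determination of the area spectrum $S(\wt L)$ of the Polterovich Lagrangian. For (\ref{thm:ineq1}) I need only the value $A(\wt L)=\pi$. For (\ref{thm:ineq2}) I genuinely need the full \emph{cyclicity} $S(\wt L)=\pi\Z$: otherwise a leafwise loop winding simultaneously around $\wt L$ and around $\ga$ could bound a disk of arbitrarily small positive area, forcing $A(N)=0$ (for instance, if the area homomorphism on $\pi_1(\wt L)\iso\Z^2$ had rank two, its image would be dense in $\R$). What rescues the construction is monotonicity of $\wt L$: it factors the area homomorphism through the Maslov homomorphism $\mu\colon\pi_1(\wt L)\to\Z$, whose image is automatically cyclic, so $S(\wt L)$ is cyclic irrespective of $\pi_1(\wt L)$. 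The remaining points — regularity of $N$, equality of all Hopf-circle actions, and the scaling law for minimal area under linear dilation — are routine.
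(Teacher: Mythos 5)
Your proposal is correct and takes essentially the same route as the paper: up to renaming the radii ($\lambda=r/\sqrt3$), your competitor $\lambda\bigl(\wt L\times S^{2k-1}\bigr)$ is exactly the paper's $\sqrt{2/3}\,rL\times S^{2d-2n+1}_{\sqrt{1/3}\,r}$, your additivity of leafwise area spectra is the paper's Lemma \ref{le:productactionspectrum}, and the limiting rescaling argument is identical (your skipping of the comparison between spectra computed in $B^{2n}$ and in $\R^{2n}$ --- the paper's Lemma \ref{le:A N} step --- is harmless for a lower bound, since restricting disks to the ball can only increase the minimal area). The single local difference is that you obtain cyclicity of $S(\wt L)$ from Polterovich's monotonicity, whereas the paper needs no such input: the proof of its Lemma \ref{lem:minimalareaL} computes the full spectrum $S(L)=\frac{\pi}{2}\Z$ directly, which rescales to $S(\wt L)=\pi\Z$.
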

The proof of this result uses again the closed Lagrangian submanifold of $\R^{2n}$ studied by L.~Polterovich. To put Theorem \ref{thm:coiso} into context, note that in \cite[Theorem 4]{SZSmall} we proved the (in-)equalities
\begin{eqnarray*}&A^d_\coiso(Z^{2n})\leq\pi,\,\forall d\in\{n,\ldots,2n-1\},&\\
&A^{2n-1}_\coiso(B^{2n})=\pi,&\\
&A^{2n-2}_\coiso(B^{2n})\geq\frac\pi2.&
\end{eqnarray*}
Combining these with Theorem \ref{thm:coiso}, it follows that the capacity $A^d_\coiso$ is normalized for $d=2n-1$, normalized up to a factor of 2 for $d=n$ and $2n-2$, and up to a factor of 3, otherwise. 
\section{Remarks and related work}
\subsection*{About Theorem \ref{thm:X}}
Note that we may not just take a closed Lagrangian submanifold $L$ of $\R^{2n}$ for $X$, since every such submanifold ``symplectically embeds'' (in the above sense) into an arbitrarily small ball. To see this, let $B\sub\R^{2n}$ be an open ball. We choose a number $c>0$ such that the rescaled Lagrangian $cL$ is contained in $B$. It follows from Weinstein's neighborhood theorem that there exist open neighborhoods $U$ and $U'$ of $L$ and $cL$, respectively, and a symplectomorphism $\phi:U\to U'$ that maps $L$ to $cL$. The restriction of $\phi$ to $U\cap\phi^{-1}(B)$ is a symplectic embedding of a neighborhood of $L$ into $B$. 

Theorem \ref{thm:X} has the following application. For $n\in\N$ and $d\in[0,2n]$ consider the quantity
\[a(n,d)\coloneqq\inf a\in[0,\infty],\]
where the infimum runs over all numbers $a>0$, for which there exists a compact subset $X$ of $B^{2n}(a)$ of Hausdorff dimension at most $d$, such that $X$ does not symplectically embed into $Z^{2n}$. (Our convention is that $\inf\emptyset=\infty$.) Note that we always have $a(n,d)\geq\pi$, and $a(n,d)$ is decreasing in $d$. Theorem \ref{thm:X} implies that 
\[a(n,d)\leq2\pi,\quad\forall d\geq n,\]
and hence we know these numbers up to a factor of 2. This improves our previous result \cite[Theorem 6]{SZSmall}. That result implies that $a(n,d)$ is bounded above by $\pi$ times some integer, depending on $n$ and $d$ in a combinatorial way. For $n=d$ this integer behaves asymptotically like $\sqrt n$, as $n\to\infty$.

Gromov's non-squeezing result (cf.~\cite{Gr}) implies that $a(n,2n)=\pi$. This can be strengthened to the equality $a(n,2n-1)=\pi$, which follows from \cite[Theorem 6]{SZSmall}. In the case $d<2$ we have $a(n,d)=\infty$. This is a consequence of the following result. 
\begin{prop}[Two-dimensional squeezing]\label{prop:squeeze}  
For all $n\in\N$ and $a>0$, every subset $X$ of $\R^{2n}$ with vanishing $2$-dimensional Hausdorff measure symplectically embeds into $Z^{2n}(a)$.
\end{prop}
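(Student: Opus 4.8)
The plan is to reduce the assertion to a purely two-dimensional statement about the first symplectic factor, moving all of the difficulty into a Moser-type embedding lemma in the plane. Write $\R^{2n}=\R^2\x\R^{2n-2}$, where $\R^2$ carries the coordinates $(x_1,y_1)$ of the first symplectic plane, and let $p\colon\R^{2n}\to\R^2$ denote the projection. Since $p$ is $1$-Lipschitz, the $2$-dimensional Hausdorff measure of $p(X)$ vanishes as well, so $p(X)\sub\R^2$ has Lebesgue measure zero. I may therefore choose an open set $V\sub\R^2$ with $p(X)\sub V$ and $\operatorname{area}(V)<a$. Then $U\coloneqq p^{-1}(V)=V\x\R^{2n-2}$ is an open neighborhood of $X$, and it suffices to produce a symplectic embedding of $U$ into $Z^{2n}(a)=B^2(a)\x\R^{2n-2}$.

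This last step reduces to two dimensions: if $\psi\colon V\to B^2(a)$ is an area-preserving embedding, then $\psi\x\id_{\R^{2n-2}}$ is a symplectic embedding of $V\x\R^{2n-2}$ into $B^2(a)\x\R^{2n-2}=Z^{2n}(a)$, as desired. (For $n=1$ this already is the whole statement.) Thus the proposition follows from the two-dimensional lemma that every open subset $V\sub\R^2$ with $\operatorname{area}(V)<a$ admits an area-preserving embedding into the open disk $B^2(a)$.

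To prove this lemma I would first replace the target by a more convenient model: the open disk $B^2(a)$ is symplectomorphic to an open square $S\sub\R^2$ of area $a$, so it is enough to embed $V$ area-preservingly into $S$. Next I decompose $V$ into its (at most countably many) connected components $V_k$ and set $v_k\coloneqq\operatorname{area}(V_k)$, so that $\sum_k v_k=\operatorname{area}(V)<a$. Choosing numbers $v_k'>v_k$ with $\sum_k v_k'<a$, I slice $S$ into pairwise disjoint horizontal open sub-rectangles $R_k\sub S$ with $\operatorname{area}(R_k)=v_k'$; this is possible because the total height $\sum_k v_k'/\sqrt a<\sqrt a$ does not exceed the side length of $S$. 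It then remains to embed each component $V_k$ area-preservingly into $R_k$ and to take the (disjoint) union of these embeddings.

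The heart of the matter, and the step I expect to be the main obstacle, is therefore the following: a connected open surface of finite area $v$ admits an area-preserving embedding into any rectangle of area strictly greater than $v$. For a bounded, topologically simple $V_k$ this is elementary, but in general $V_k$ may be unbounded and topologically complicated (for instance a dense union of shrinking disks), so one cannot simply apply Moser's trick on a compact set. I would handle this by exhausting $V_k$ by an increasing sequence of compact subsurfaces with boundary and building the embedding as a limit, controlling at each stage the area carried into $R_k$; equivalently one may invoke the classical theorem of Greene and Shiohama on volume-preserving embeddings of noncompact manifolds, whose hypotheses are met because $V_k$ is a planar surface and the strict area surplus $v_k'>v_k$ can absorb the behavior near the ends of $V_k$. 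Assembling the component-wise embeddings, composing with the symplectomorphism $S\iso B^2(a)$, and taking the product with $\id_{\R^{2n-2}}$ then yields the required symplectic embedding of $U$ into $Z^{2n}(a)$.
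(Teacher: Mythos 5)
Your proposal is correct and follows essentially the same route as the paper: project $X$ to the first $\R^2$ factor, use the vanishing of the $2$-dimensional Hausdorff measure to find an open neighborhood of the projection of area less than $a$, embed that neighborhood area-preservingly into $B^2(a)$ by a Moser-type argument resting on the Greene--Shiohama theorem, and take the product with the identity on $\R^{2n-2}$. Your planar embedding lemma is exactly the paper's Lemma \ref{le:volume} (stated there in all dimensions), whose proof likewise reduces to connected pieces and invokes Greene--Shiohama; your explicit component-by-component rectangle decomposition merely fills in what the paper dismisses as an elementary argument.
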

The proof of this result is based on Moser isotopy. In contrast with this proposition, a straight-forward argument shows that $a(1,2)=\pi$. Hence in the case $n=1$, the values $a(1,d)$ are all known.\\ 

Theorem \ref{thm:X} is related to the following results by J.-C.~Sikorav and F.~Schlenk. In \cite{Si} Sikorav proved that there does not exist a symplectomorphism of $\R^{2n}$ which maps $\T^n$ into $Z^{2n}$. Schlenk noted in \cite[p.~8]{SchlEmb} that combining this result with the Extension after Restriction Principle implies the ``Symplectic Hedgehog Theorem'': For every $n\geq2$, no starshaped domain in $\R^{2n}$ containing the torus $\T^n$ symplectically embeds into the cylinder $Z^{2n}$. It follows that no neighborhood of the set 
\[[0,1]\cdot\T^n:=\big\{cx\,\big|\,c\in[0,1],\,x\in\T^n\big\}\]
can be squeezed into $Z^{2n}$. This set has Hausdorff dimension $n+1$ and is contained in the ball $\BAR B^{2n}(n\pi)$. Theorem \ref{thm:X} improves this statement in two ways: The set $X$ in that result has Hausdorff dimension only $n$ and is contained in the ball $\BAR B^{2n}(2\pi)$, whose size does not depend on $n$. 
\subsection*{About Proposition \ref{prop:2n-1}}
In the case $n\geq2$ the condition on the Hausdorff measure in this result is necessary, since then no neighborhood of the unit sphere symplectically embeds into $Z^{2n}$. See \cite[Corollary 5]{SZSmall}.
\subsection*{About the regular coisotropic capacity and Theorem \ref{thm:coiso}} 
A motivation for the definition of $A_\coiso^d$ as in (\ref{eq:A d coiso}) is that for an open subset $U$ of an aspherical symplectic manifold $(M,\om)$ the number $A_\coiso^d(U)$ is a lower bound on the displacement energy of $U$, if $(M,\om)$ is geometrically bounded. This follows from \cite[Theorem 1.1]{ZiLeafwise}.

For $d=n$ the capacity $A_\Lag=A_\coiso^n$ is closely related to the Lagrangian capacity introduced by K.~Cieliebak and K.~Mohnke: We denote
\begin{align*}\M\coloneqq\big\{&(M,\om)\textrm{ symplectic manifold }\big|\\
&\dim M=2n,\,\pi_i(M)\textrm{ trivial },i=1,2\big\}.
\end{align*}
In \cite{CM}\footnote{See also \cite{CHLS}, Sec.~2.4, p.~11.} Cieliebak and Mohnke defined the \emph{Lagrangian capacity} to be the map $c_L\colon\M\to[0,\infty)$, given by
\[c_L(M,\om)\coloneqq\sup\big\{A(M,\om,L)\,\big|\,L\sub M\textrm{ embedded Lagrangian torus}\big\},\]
where $A(L)=\inf\big(S(L)\cap(0,\infty)\big)$ denotes the minimal symplectic area of $L$. The authors proved that 
\begin{equation}\label{eq:c L}c_L(B^{2n},\om_0)=\frac\pi n.\end{equation} 
The capacity $c_L$ is bounded above by $A_\Lag$. For $n\geq3$, it is strictly smaller than $A_\Lag$, when applied to $(B^{2n},\om_0)$. This follows from inequality (\ref{thm:ineq1}) and equality \eqref{eq:c L}.

For $d=2n-1$ the capacity $A_\coiso^{2n-1}$ is related to a definition recently introduced by H.~Geiges and K.~Zehmisch: In \cite{GZBall,GZWeinstein} these authors defined, for any symplectic manifold $(V,\om)$, 
\[c(V,\om):=\sup_{(M,\al)}\big\{\inf(\al)\,\big|\,\exists\textrm{ contact type embedding }(M,\al)\iinj(V,\om)\big\},\]
where the supremum is taken over all closed contact manifolds $(M,\al)$, and $\inf(\al)$ denotes the infimum of all positive periods of closed orbits of the Reeb vector field $R_\al$. They showed that $c$ is a normalized symplectic capacity. (See \cite[Theorem 4.5]{GZWeinstein}.) 

As a consequence of Theorem \ref{thm:coiso} and \cite[Theorem 4]{SZSmall}, the value of the capacity $A_\Lag=A_\coiso^n$ on the ball $B^{2n}$ lies between $\frac\pi2$ and $\pi$. In the case $n=2$ this value can be exactly calculated, if we modify the definition of $A_\Lag$ by restricting to \emph{orientable} Lagrangian submanifolds. Namely, the so obtained capacity $A_\Lag^+$ satisfies
\[A_\Lag^+(B^4)=\frac\pi2.\]
To see this, we denote by $\T^2=(S^1)^2$ the standard torus in $\R^4$. For every $r<\frac1{\sqrt2}$ the rescaled torus $r\T^2$ is a Lagrangian submanifold of $B^4$, with minimal area $\pi r^2$. It follows that $A_\Lag^+(B^4)\geq\frac\pi2$. To see the opposite inequality, note that every orientable closed connected Lagrangian submanifold $L\sub B^4$ is diffeomorphic to the torus $\T^2$, since its Euler characteristic vanishes. For such an $L$, K.~Cieliebak and K.~Mohnke proved \cite{CM} that $A(L)<\frac\pi2$. The statement follows.
\section{Background and proofs of the results of section \ref{sec:main}}\label{sec:back proofs}
\subsection{Background}\label{subsec:back}
Let $(M,\om)$ be a symplectic manifold and $N\sub M$ a submanifold. Then $N$ is called \emph{coisotropic} iff for every $x\in N$ the subspace
\[T_xN^\om=\big\{v\in T_xM\,\big|\,\om(v,w)=0,\,\forall w\in T_xN\big\}\]
of $T_xM$ is contained in $T_xN$. Examples include $N=M$, hypersurfaces, and Lagrangian submanifolds of $M$. Let $N\sub M$ be a coisotropic submanifold. Then $\om$ gives rise to the isotropic (or characteristic) foliation on $N$. We define the \emph{isotropy relation on $N$} to be the subset 
\[R^{N,\om}\coloneqq\big\{(x(0),x(1))\,\big|\,x\in C^\infty([0,1],N)\colon\dot x(t)\in(T_{x(t)}N)^\om,\,\forall t\big\}\]
of $N\x N$. This is an equivalence relation on $N$. For a point $x_0\in N$ we call the $R^{N,\om}$-equivalence class of $x_0$ the \emph{isotropic leaf} through $x_0$. (This is the leaf of the isotropic foliation that contains $x_0$.) We call $N$ \emph{regular} \label{regular} iff $R^{N,\om}$ is a closed subset and a submanifold of $N\times N$. This holds if and only if there exists a manifold structure on the set of isotropic leaves of $N$, such that the canonical projection $\pi_N$ from $N$ to the set of leaves is a submersion, cf.~\cite[Lemma 15]{ZiLeafwise}. If $N$ is closed then by C.~Ehresmann's theorem this implies that $\pi_N$ is a smooth (locally trivial) fiber bundle. (See the proposition on p.~31 in \cite{Eh}.) 

We define the \emph{(symplectic) area (or action) spectrum} and the \emph{minimal (symplectic) area} of $N$ as
\begin{eqnarray}\label{eq:S M om N}&S(N):=S(M,\om,N)\coloneqq&\\
\nonumber&\left\{\displaystyle\int_\D u^*\om\,\bigg|\,u\in C^\infty(\D,M)\colon\,\exists\textrm{ isotropic leaf }F\textrm{ of }N\colon\, u(S^1)\sub F\right\},&\\
\label{eq:A N}&A(N):=A(M,\om,N)\coloneqq\inf\big(S(M,\om,N)\cap (0,\infty)\big)\in[0,\infty].&
\end{eqnarray}
(Our convention is that $\inf\emptyset=\infty$.) Note that if $L=N$ is a Lagrangian submanifold of $M$ then the isotropic leaf of a point $x\in L$ is the connected component of $L$ containing $x$, and therefore the area spectrum of $L$ is given by
\begin{equation}\label{eq:S L}S(L)=\left\{\displaystyle\int_\D u^*\om\,\bigg|\,u\in C^\infty(\D,M)\colon\,u(S^1)\in L\right\}.\end{equation}
\subsection{Proof of Theorem \ref{thm:X} (Non-squeezable small set)}
The proof of Theorem \ref{thm:X} is based on the following result. 
\begin{prop}\label{prop:L} Let $n\geq2$ and $L\sub\R^{2n}$ be a non-empty closed Lagrangian submanifold. Then there exists a smooth map 
\[u:S^2\to[0,1]\cdot L\coloneqq\big\{cx\,\big|\,c\in[0,1],x\in L\big\}\sub\R^{2n},\]
such that the union $L\cup u(S^2)$ does not symplectically embed into the cylinder $Z^{2n}(A(\R^{2n},\om_0,L))$.
\end{prop}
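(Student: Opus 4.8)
The plan is to argue by contradiction: from a hypothetical squeezing I will produce a closed Lagrangian in an open cylinder whose minimal area is simultaneously $\geq A(L)$ and $<A(L)$. Fix the standard Liouville primitive $\lambda_0$ with $d\lambda_0=\om_0$ on $\R^{2n}$, write $a\coloneqq A(\R^{2n},\om_0,L)$, and suppose toward a contradiction that a symplectic embedding $\phi\colon U\to Z^{2n}(a)$ exists on an open neighborhood $U$ of $L\cup u(S^2)$, where $u$ is the map constructed below. Then $L'\coloneqq\phi(L)\sub Z^{2n}(a)$ is a closed Lagrangian, and since $Z^{2n}(a)$ is contractible every loop in $L'$ is contractible in it, so $L'$ is admissible for the capacity $A_\Lag$. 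The crux will be to arrange $u$ so that $\phi$ preserves the minimal area, $A(Z^{2n}(a),\om_0,L')=a$, whereas a soft compactness argument forces $A(Z^{2n}(a),\om_0,L')<a$.

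\emph{Construction of $u$.} Since $L$ is closed, $\pi_1(L)$ is finitely generated; I choose smooth loops $\gamma_1,\dots,\gamma_k$ in $L$ based at a common point $p$ whose classes generate $\pi_1(L)$. Over each I form the cone $C_i(r,t)\coloneqq r\,\gamma_i(t)$, a smooth singular disk in $[0,1]\cdot L$ with $\partial C_i=\gamma_i$. I then assemble these into a single smooth map $u\colon S^2\to[0,1]\cdot L$: map one hemisphere by a fan of the cones $C_1,\dots,C_k$ (sharing the spine $\{rp\}$) and the other hemisphere by the cone over the concatenation $\gamma_1*\cdots*\gamma_k$, and smooth near the cone point $0$. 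The only property I will use is that each generator $\gamma_i$ bounds the disk $C_i$, whose image lies in $u(S^2)\sub U$.

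\emph{Area preservation (Step B).} On $U$ set $\mu\coloneqq\phi^*\lambda_0-\lambda_0$. Then $d\mu=\phi^*\om_0-\om_0=0$, so $\mu$ is closed and $[\alpha]\mapsto\oint_\alpha\mu$ is a homomorphism $H_1(L)\to\R$. By Stokes applied to the cone $C_i\sub U$ I get $\oint_{\gamma_i}\mu=\int_\D C_i^*\,d\mu=0$; as the $[\gamma_i]$ generate, this homomorphism vanishes identically. Because $Z^{2n}(a)$ is exact, every loop $\beta=\phi\circ\alpha$ in $L'$ bounds a disk there of symplectic area $\oint_\beta\lambda_0=\oint_\alpha\phi^*\lambda_0=\oint_\alpha\lambda_0+\oint_\alpha\mu=\oint_\alpha\lambda_0$, and $\oint_\alpha\lambda_0$ is precisely the area of a disk in $\R^{2n}$ bounded by $\alpha$. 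Hence the spectra coincide, $S(Z^{2n}(a),\om_0,L')=S(\R^{2n},\om_0,L)$, and therefore $A(Z^{2n}(a),\om_0,L')=a$.

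\emph{Compactness and conclusion (Step A).} Since $L'$ is compact and $Z^{2n}(a)=B^2(a)\x\R^{2n-2}$ is open in its first factor, the projection of $L'$ to $B^2(a)$ lies in a smaller disk, so $L'\sub Z^{2n}(a')$ for some $a'<a$; note the value of the minimal area is unchanged under shrinking the ambient cylinder, as it is computed from Liouville actions. The cited bound $A_\Lag(Z^{2n})\le\pi$, itself a consequence of Chekanov's displacement-energy estimate, rescales to $A_\Lag(Z^{2n}(a'))\le a'$, so $A(Z^{2n}(a),\om_0,L')\le A_\Lag(Z^{2n}(a'))\le a'<a$, contradicting Step B. I expect the main obstacle to be Step B: a priori, disks in $Z^{2n}(a)$ bounding loops of $\phi(L)$ may leave $\phi(U)$ and have area smaller than $a$, and the whole point of introducing $u$ is that its cones pin down the Liouville class of $\phi(L)$ and exclude this. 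The remaining technical care is only in realizing the union of cones as a genuine smooth sphere and smoothing at $0$.
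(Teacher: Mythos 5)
Your proposal is correct, and its skeleton coincides with the paper's proof: cones over smooth loops generating $\pi_1(L)$, assembled into a sphere; the observation that these cones force any symplectic embedding of a neighborhood to preserve the minimal area of $L$; and a contradiction obtained from compactness of $\phi(L)$ together with a Chekanov-type bound for closed Lagrangians in a cylinder. What differs is how the two key steps are discharged, and both of your substitutions are sound. For area preservation, the paper twice invokes its Lemma \ref{le:A N} (quoted from \cite{SZSmall}), applied to the inclusion $U\hookrightarrow\R^{2n}$ and to $\phi$; you instead give a self-contained Stokes argument: $\mu:=\phi^*\lambda_0-\lambda_0$ is closed on $U$, its periods vanish on the generators because these bound the cones inside $U$, hence the Liouville periods --- and therefore the area spectra --- of $L$ and $\phi(L)$ agree. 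This is a more elementary inline proof of exactly the invariance needed, valid because both source and target are exact and the cylinder is contractible. For the upper bound, the paper combines Theorem \ref{thm:e} (Chekanov) with the elementary estimate $e\big(Z^{2n}(a),\R^{2n},\om_0\big)\le a$ and monotonicity of displacement energy, while you cite the capacity bound $A_\Lag(Z^{2n})\le\pi$ of \cite[Theorem 4]{SZSmall} and rescale to $Z^{2n}(a')$; this is the same underlying mathematics packaged differently, and it is not circular, since that bound comes from the earlier paper. Two loose ends remain, neither fatal: first, you implicitly assume $L$ connected (your loops share a base point $p$), so you should, as the paper does, reduce to this case first --- for disconnected $L$ one takes a component realizing the minimum of the minimal areas of the components, and notes that non-embeddability for that component implies it for the union; second, your two-hemisphere gluing (fan of cones versus cone over the concatenation) does not literally match along the equator --- the paper's explicit parametrization, which runs up and down each cone in succession and is made locally constant near the cone points by a cutoff function, is the clean way to realize the union of cones as the image of a smooth sphere, and you would need to do something equivalent.
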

The proof of Proposition \ref{prop:L} follows the lines of the proof of \cite[Proposition 21]{SZSmall}. It is based on the following result, which is due to Y.~Chekanov. Let $(M,\om)$ be a symplectic manifold. We denote by $\HH(M,\om)$ the set of all functions $H\in C^\infty\big([0,1]\x M,\R\big)$ whose Hamiltonian time $t$ flow $\phi_H^t\colon M\to M$ exists and is surjective, for every $t\in[0,1]$.\footnote{The time $t$ flow of a time-dependent vector field on a manifold $M$ is always an injective smooth immersion on its domain of definition. (This follows for example from \cite[Theorem 17.15, p.~451, and Problem 17-15, p.~463]{Le}.) Hence if it is everywhere well-defined and surjective then it is a diffeomorphism of $M$. The second condition is not a consequence of the first one. As an example, consider $M:=(0,\infty)\x\R$, $\om:=\om_0$, $H(q,p):=p$, and $t>0$. The Hamiltonian time $t$ flow of $H$ is everywhere well-defined and given by $\phi_H^t(q,p)=(q+t,p)$. However, the map $\phi_H^t:M\to M$ is not surjective.}

We define the \emph{Hofer norm} 
\[\Vert\cdot\Vert\colon\HH(M,\om)\to[0,\infty],\quad\Vert H\Vert\coloneqq\int_0^1\big(\sup_MH^t-\inf_MH^t\big)dt,\]
and the \emph{displacement energy} of a subset $X\sub M$ to be 
\[e(X,M,\om)\coloneqq\inf\big\{\Vert H\Vert\,\big|\,H\in\HH(M,\om)\colon\phi_H^1(X)\cap X=\emptyset\big\}.\footnote{Alternatively, one can define a displacement energy, using only functions $H$ with compact support. However, it seems more natural to allow for all functions in $\HH(M,\om)$. For some remarks on this issue see \cite{SZHofer}.}\]
\begin{thm}\label{thm:e} Let $L\sub M$ be a closed Lagrangian submanifold. Assume that $(M,\om)$ is geometrically bounded (see \cite{Ch}). Then we have
\[e(L,M,\om)\geq A(M,\om,L).\]
\end{thm}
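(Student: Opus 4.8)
The plan is to show that every $H\in\HH(M,\om)$ displacing $L$ satisfies $\Vert H\Vert\geq A(L)$, where $A(L)\coloneqq A(M,\om,L)$; taking the infimum over all such $H$ (the inequality being vacuous when $L$ is non-displaceable, since then $e(L,M,\om)=\infty$) then yields the theorem. The mechanism is an \emph{energy threshold}. The set $S(L)\coloneqq S(M,\om,L)$ is a subgroup of $\R$ (reversing the orientation of a disk negates its area, and disks concatenate), and it contains the symplectic areas of all $J$-holomorphic spheres; consequently every non-constant $J$-holomorphic disk with boundary on $L$, and every non-constant $J$-holomorphic sphere, has positive symplectic area lying in $S(L)$, hence at least $A(L)$. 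The idea is to exploit this: below the level $A(L)$ no pseudoholomorphic bubbling can occur, which makes a Floer-theoretic count of Hamiltonian chords well-defined and forces a chord to exist, contradicting displacement.

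Concretely, I would argue by contradiction: suppose $H$ displaces $L$ but $\Vert H\Vert<A(L)$. Fix an $\om$-compatible, geometrically bounded almost complex structure $J$, and consider the perturbed Cauchy--Riemann (Floer) equation for strips $v\colon\R\x[0,1]\to M$ with both boundary components on $L$ and Hamiltonian term given by $H$. Its finite-energy solutions converge at the two ends to time-$1$ Hamiltonian chords of $H$ with endpoints on $L$, and these chords are in bijection with the points of $\phi_H^1(L)\cap L$. Displacement means this set is empty, so at full strength there are no chords at all. I would then interpolate along the family $\lam H$, $\lam\in[0,1]$: at $\lam=0$ the equation degenerates to the unperturbed holomorphic strip equation on $L$, and (after a small Morse perturbation) the associated Floer homology is the Morse homology $H_*(L;\Z/2)$, which is nonzero since $L$ is a non-empty closed manifold.

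The heart of the argument is compactness, and this is where $\Vert H\Vert<A(L)$ and geometric boundedness enter. Since $\Vert\lam H\Vert\leq\Vert H\Vert<A(L)$ for every $\lam$, the standard action--energy identity bounds the energy of every solution in the $\lam$-family by $\Vert H\Vert$, strictly below the area of any possible bubble, so no energy concentration can occur. Geometric boundedness of $(M,\om)$ supplies the a priori $C^0$- and gradient-estimates that confine all solutions to a fixed compact set and, combined with the sub-threshold energy bound, yield genuine Gromov compactness of the parametrized moduli space. With transversality arranged by a generic perturbation, the continuation map is then an isomorphism between the Floer homologies at $\lam=0$ and $\lam=1$. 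But the $\lam=0$ side is $H_*(L;\Z/2)\neq0$, whereas the $\lam=1$ side vanishes for lack of generators. This contradiction proves $\Vert H\Vert\geq A(L)$.

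I expect the main obstacle to be precisely this analytic package in a non-compact target: setting up Gromov compactness and transversality for Lagrangian Floer strips over a merely geometrically bounded manifold, and verifying that the sub-threshold energy bound excludes both disk bubbling (with boundary on $L$) and interior sphere bubbling. A more economical variant sidesteps the full continuation isomorphism: turning on $\lam H$, one observes that no solution can survive to $\lam=1$ (no chords) while the $\lam=0$ moduli space is non-empty, so compactness must fail somewhere in the family; the resulting energy concentration produces a non-constant $J$-holomorphic bubble of area at most $\Vert H\Vert$ whose boundary lies on $L$ (or which is an interior sphere). Since its area lies in $S(L)\cap(0,\infty)$, it is at least $A(L)$, giving $\Vert H\Vert\geq A(L)$ directly. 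Either way, the only quantitative input is the threshold $A(L)$, and the only hard input is the compactness and transversality theory, which is Chekanov's contribution.
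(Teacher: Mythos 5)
Your proposal is correct and follows essentially the same route as the paper: the paper's entire proof is a citation of the Main Theorem of \cite{Ch} together with the ``elementary argument'' that the areas of non-constant $J$-holomorphic disks with boundary on $L$ and of non-constant $J$-holomorphic spheres lie in $S(M,\om,L)\cap(0,\infty)$ and are therefore bounded below by $A(M,\om,L)$ --- precisely the threshold observation in your first paragraph. The Floer-theoretic continuation and bubbling argument you sketch (and whose analytic core you rightly flag as the hard part) is the content of Chekanov's theorem itself, which the paper treats as a black box rather than reproving.
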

\begin{proof}[Proof of Theorem \ref{thm:e}] This follows from the Main Theorem in \cite{Ch} by an elementary argument.  
\end{proof}
For the proof of Proposition \ref{prop:L}, we also need the following.
\begin{lemma}\label{le:A N} Let $(M,\om)$ and $(M',\om')$ be symplectic manifolds of the same dimension, $N\sub M$ a coisotropic submanifold, and $\phi\colon M\to M'$ a symplectic embedding. Assume that $(M',\om')$ is aspherical, and every continuous loop in a leaf of $N$ is contractible in $M$. Then we have
\[A\big(M',\om',\phi(N)\big)=A(M,\om,N).\]
\end{lemma}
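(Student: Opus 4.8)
The plan is to prove the stronger statement that the two area spectra coincide, $S(M,\om,N)=S(M',\om',\phi(N))$, from which the claimed equality of minimal areas is immediate by the definition (\ref{eq:A N}). The starting observation is that, since $\phi$ is a symplectomorphism onto its image, it carries the characteristic distribution $x\mapsto(T_xN)^\om$ to $\phi(x)\mapsto(T_{\phi(x)}\phi(N))^{\om'}$, and hence maps each isotropic leaf $F$ of $N$ bijectively onto an isotropic leaf $\phi(F)$ of $\phi(N)$; conversely every leaf of $\phi(N)$ arises in this way.

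The inclusion $S(M,\om,N)\sub S(M',\om',\phi(N))$ is straightforward and uses neither asphericity nor the contractibility hypothesis. Given $u\in C^\infty(\D,M)$ with $u(S^1)\sub F$ for some leaf $F$ of $N$, the composition $\phi\circ u$ satisfies $(\phi\circ u)(S^1)\sub\phi(F)$, a leaf of $\phi(N)$, and by symplecticity $\int_\D(\phi\circ u)^*\om'=\int_\D u^*\phi^*\om'=\int_\D u^*\om$, so this value lies in $S(M',\om',\phi(N))$.

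For the reverse inclusion I would argue as follows. Let $u'\in C^\infty(\D,M')$ with $u'(S^1)\sub F'$ for some leaf $F'=\phi(F)$ of $\phi(N)$. Since $F'\sub\phi(M)$, the loop $\ga\coloneqq\phi^{-1}\circ(u'|_{S^1})$ is a well-defined smooth loop in the leaf $F$ of $N$. By hypothesis $\ga$ is contractible in $M$, so it bounds a disk $v\in C^\infty(\D,M)$ with $v|_{S^1}=\ga$, and then $(\phi\circ v)|_{S^1}=u'|_{S^1}$. Gluing $u'$ and $\phi\circ v$ along this common boundary loop (after a standard collar reparametrization of the domains near $S^1$, making both maps independent of the radial coordinate, which leaves the symplectic areas unchanged) produces a smooth sphere $w\colon S^2\to M'$. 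Asphericity of $(M',\om')$ then gives $0=\int_{S^2}w^*\om'=\int_\D(u')^*\om'-\int_\D(\phi\circ v)^*\om'$, whence $\int_\D(u')^*\om'=\int_\D(\phi\circ v)^*\om'=\int_\D v^*\om$. Since $v(S^1)=\ga\sub F$ lies in a leaf of $N$, this last value belongs to $S(M,\om,N)$, as desired.

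The main obstacle is this gluing step: one must check that $u'$ and $\phi\circ v$ can be matched into a genuinely smooth map on $S^2$ and that the resulting sphere-integral correctly records the difference of the two disk-integrals, so that asphericity can be applied. Both points are routine once the collar reparametrization above is in place; everything else reduces to the symplectic invariance of $\int u^*\om$ together with the fact that $\phi$ respects the characteristic foliations.
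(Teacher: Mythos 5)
Your proposal is correct. Note that the paper itself gives no argument for this lemma: it simply cites \cite[Remark 32 and Lemma 33]{SZSmall}, and what you have written is precisely the standard argument that those references encapsulate --- the easy inclusion $S(M,\om,N)\sub S(M',\om',\phi(N))$ by composing disks with $\phi$ (using that $\phi$ maps leaves to leaves), and the reverse inclusion by transporting the boundary loop back via $\phi^{-1}$, capping it off with a disk in $M$ (using the contractibility hypothesis), gluing the two disks to a sphere in $M'$, and invoking asphericity. Your version even establishes the slightly stronger statement that the two area spectra coincide, from which the equality of minimal areas is immediate; the only steps you compress (smoothing the continuous contraction rel boundary, and the collar argument showing the glued sphere is smooth with the correct area) are routine and you flag them appropriately.
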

\begin{proof}[Proof of Lemma \ref{le:A N}] This follows from \cite[Remark 32 and Lemma 33]{SZSmall}.
\end{proof}
\begin{proof}[Proof of Proposition \ref{prop:L}] Without loss of generality we may assume that $L$ is connected. We choose a point $x_0\in L$. Since $L$ is a closed manifold, there exists a finite set $\LL$ of loops in $L$ that generate the fundamental group $\pi_1(L,x_0)$. We choose these loops to be smooth, and define
\begin{eqnarray*}
&f:\LL\x[0,1]\x S^1\to\R^{2n},\quad f(x,t,z):=tx(z),&\\
&X:=L\cup\im(f).&
\end{eqnarray*}
The statement of the proposition is a consequence of the following two claims. 
\begin{claim}\label{claim:u} If $\LL\neq\emptyset$\footnote{By a result of M.~Gromov \cite{Gr} this is always the case. However, we do not use this in the proof of Proposition \ref{prop:L}.} then there exists a smooth map from $S^2$ to $\R^{2n}$ with the same image as $f$. 
\end{claim}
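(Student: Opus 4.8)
Looking at Claim \ref{claim:u}, I need to prove that if $\LL \neq \emptyset$, then there exists a smooth map from $S^2$ to $\R^{2n}$ whose image equals $\im(f)$, where $f(x,t,z) = tx(z)$ is the union of cones over the loops in $\LL$.

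Let me understand the image of $f$. Each loop $x \in \LL$ gives a cone $\{t x(z) : t \in [0,1], z \in S^1\}$ — this is the set of all points on line segments from the origin $0$ to points $x(z)$ on the loop. The full image $\im(f)$ is the union of these cones over all loops in the finite set $\LL$. All these cones share the common apex at the origin $0 = f(x,0,z)$.

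\medskip

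The plan is to realize $\im(f)$ as the image of $S^2$ by building up the sphere as a quotient and then smoothing. First I would handle a single cone: the map $[0,1] \times S^1 \to \R^{2n}$, $(t,z) \mapsto t x(z)$, collapses the entire boundary circle $\{0\} \times S^1$ to the apex $0$, so it factors through the quotient $([0,1] \times S^1)/(\{0\} \times S^1)$, which is homeomorphic to a closed disk $\D$ (with the apex corresponding to the center). Thus each individual cone is the image of a map $\D \to \R^{2n}$ sending $\dd \D$ to the loop and the center to $0$. The key observation is that all cones agree at the apex, so I can glue them along that common point.

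\medskip

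Next I would assemble the finitely many disks into a sphere. Writing $\LL = \{x_1, \ldots, x_k\}$, I would take $k$ disks, each mapping onto one cone with center going to $0$, and glue them all at their centers to a single point (a wedge-like construction at the apex), while arranging the disks to tile a sphere. Concretely, I would partition $S^2$ into $k$ closed regions meeting at a common interior point, define the map region-by-region to trace out each cone, and ensure the definitions match (all equal to $0$) along the shared locus. To get a genuinely smooth map from $S^2$ rather than merely continuous, I would precompose each disk parametrization with a smooth map that is \emph{flat} (all derivatives vanishing) near the boundary and near the preimage of the apex, so that the pieces glue together smoothly: a standard bump-function reparametrization $t \mapsto \rho(t)$ with $\rho \equiv 0$ near $0$ kills all derivatives at the apex, and handling the angular gluing between adjacent regions similarly ensures $C^\infty$ matching. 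Since the loops $x_i$ are themselves smooth (as chosen in the proof of Proposition \ref{prop:L}), the resulting map $u : S^2 \to \R^{2n}$ is smooth and has image exactly $\im(f) = \bigcup_i \operatorname{cone}(x_i)$.

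\medskip

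The main obstacle I anticipate is purely technical rather than conceptual: verifying \emph{smoothness} at the common apex, where all $k$ cone-pieces meet, and along the gluing arcs between adjacent disk-regions on $S^2$. At the apex the map is highly non-injective and many directions collapse, so one must check that a suitable flat reparametrization makes all partial derivatives vanish from every side consistently; the image constraint (that we hit exactly $\im(f)$ and nothing more) must be preserved under this smoothing. I expect this is where the bulk of a rigorous argument goes, but it is a standard "collapse and smooth by a flat cutoff" construction, and the hypothesis $\LL \neq \emptyset$ (indeed finite) guarantees the construction involves only finitely many pieces glued at one point. The footnote's remark that $\LL$ is automatically non-empty (by Gromov) is not needed; we simply assume it.
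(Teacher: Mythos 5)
Your proposal is correct in substance, and it shares with the paper's proof the one idea that matters: reparametrize the radial (cone) direction by a smooth function that is \emph{constant} near every gluing locus, so that the assembled map is automatically smooth there, while the image is unchanged because the reparametrization still attains the value $1$. The assembly of $S^2$, however, is genuinely different. You arrange the cones \emph{in parallel}: each cone is the image of a disk-like region whose entire boundary is sent to the apex $0$, and the $k$ regions are spread over the sphere (equivalently, and even more simply, one can take $k$ disjoint closed disks in $S^2$, map each onto its cone so that the map is identically $0$ near that disk's boundary, and send the rest of $S^2$ to $0$). The paper arranges them \emph{in series}: it writes down a single explicit formula on the cylinder $[0,2k]\x S^1$ that runs up and back down cone $i$ for $t\in[2i-2,2i]$, with a profile $\rho$ that is constant near the integer times, and then collapses the two boundary circles of the cylinder to obtain $S^2$; each cone is swept twice, and smoothness needs no discussion of tilings because the formula is locally constant in $t$ at every seam. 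Your version avoids the double sweep and is more pictorial; the paper's version avoids any combinatorics on the sphere. One caveat about your write-up: the intermediate descriptions --- disks with center $\mapsto 0$ and boundary $\mapsto$ loop, ``glued at their centers,'' and a ``partition of $S^2$ into $k$ closed regions meeting at a common interior point'' --- are not literally workable: with that parametrization the definitions do \emph{not} match along shared boundary arcs (they would equal the various loops $x_i$, not $0$), and for $k\geq2$ closed regions tiling $S^2$ necessarily meet along arcs, not merely at a point. What rescues the argument is exactly your final matching condition: invert the parametrization so that each region's \emph{boundary} (the whole shared locus) maps to $0$ and the loop $x_i$ is hit on an interior circle; with the reparametrization identically $0$ near that locus, the glued map is smooth and its image is exactly $\im(f)$.
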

\begin{proof}[Proof of Claim \ref{claim:u}] We denote $k:=|\LL|$, and choose a bijection
\[\{1,\ldots,k\}\ni i\mapsto x_i\in\LL\] 
and a function $\rho\in C^\infty\big([0,1],[0,1]\big)$ that attains the value $i$ in a neighborhood of $i=0,1$. We define the map $u:[0,2k]\x S^1\to\R^{2n}$ by
\[u(t,z):=\left\{\begin{array}{ll}
\rho(t-2i+2)x_i(z),&\textrm{for }t\in[2i-2,2i-1],\\
\rho(2i-t)x_i(z),&\textrm{for }t\in[2i-1,2i],\\
\end{array}\right.\]
for $i=1,\ldots,k$. This map is smooth and has the same image as $f$. We identify $[0,2k]\x S^1$ with the two boundary circles collapsed with $S^2$. Since $u$ is constant in neighborhoods of $\{0\}\x S^1$ and $\{2k\}\x S^1$, it descends to a map from $S^2$ to $\R^{2n}$. This map has the required properties. This proves Claim \ref{claim:u}.
\end{proof}
\begin{claim}\label{claim:U} For every open neighborhood $U$ of $X$, and every symplectic embedding $\phi\colon U\to\R^{2n}$ we have $\phi(U)\not\sub Z^{2n}\big(A(\R^{2n},\om_0,L)\big)$. 
\end{claim}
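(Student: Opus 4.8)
The plan is to prove Claim \ref{claim:U} by contradiction, using the displacement-energy lower bound of Theorem \ref{thm:e} together with the embedding invariance of minimal area from Lemma \ref{le:A N}. Suppose, contrary to the claim, that there were an open neighborhood $U$ of $X$ and a symplectic embedding $\phi\colon U\to\R^{2n}$ with $\phi(U)\sub Z^{2n}(a)$, where I abbreviate $a:=A(\R^{2n},\om_0,L)$. The key point is that $L\sub X\sub U$, so $\phi$ restricts to a symplectic embedding of a neighborhood of $L$, and $L':=\phi(L)$ is a closed Lagrangian submanifold of the cylinder $Z^{2n}(a)$. The open cylinder $Z^{2n}(a)=B^2(a)\x\R^{2n-2}$ is symplectically displaceable by a Hamiltonian of arbitrarily small Hofer norm: translating the first $\R^2$-factor off the disc $B^2(a)$ can be realized by a compactly-supported Hamiltonian whose norm is arbitrarily close to the area $a$ of the disc. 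Hence the displacement energy satisfies $e(L',Z^{2n}(a),\om_0)\le a$, in fact one can push the bound down toward $a$ as closely as desired.

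Next I would invoke Theorem \ref{thm:e} for the Lagrangian $L'$ inside the cylinder. Since the open cylinder $(Z^{2n}(a),\om_0)$ is geometrically bounded, the theorem gives $e(L',Z^{2n}(a),\om_0)\ge A(Z^{2n}(a),\om_0,L')$. Combining this with the displacement bound of the previous paragraph yields $A(Z^{2n}(a),\om_0,L')\le a$. The remaining ingredient is to identify this minimal area with that of the original Lagrangian $L$. Here Lemma \ref{le:A N} applies: the cylinder is aspherical (indeed $Z^{2n}(a)$ is symplectomorphic to an open subset of $\R^{2n}$, hence aspherical), and because every loop in $X$ bounds in the contractible ambient space the relevant contractibility hypothesis holds for $L$; the lemma then gives $A(Z^{2n}(a),\om_0,\phi(L))=A(\R^{2n},\om_0,L)=a$. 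Substituting this into the inequality produces $a\le a$ with a strict version forcing a contradiction: the displacement energy of the cylinder is strictly less than $a$ (one displaces $B^2(a)$ by a Hamiltonian of norm $<a+\eps$ for every $\eps$, but in fact any Lagrangian sitting inside the \emph{open} cylinder is displaced with energy strictly below $a$), whereas Theorem \ref{thm:e} demands energy at least $a$.

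The step I expect to be the main obstacle is making the strict inequality precise, so that the two bounds genuinely collide rather than merely touch. The delicate point is that the open cylinder $Z^{2n}(a)$ has displacement energy exactly $a$ in the limit, so a naive comparison yields only $a\le a$. To obtain a contradiction one must exploit that $L'$ is \emph{compact} and therefore contained in a proper sub-cylinder $Z^{2n}(a')$ with $a'<a$, or equivalently that $L'$ lies in $B^2(a-\delta)\x K$ for some compact $K$ and some $\delta>0$; displacing this smaller region costs Hofer norm at most $a-\delta<a$, while Theorem \ref{thm:e} still forces $e(L',\cdot)\ge a$. I would therefore phrase the argument to extract such a $\delta$ from compactness of $\phi(L)$ inside the open cylinder, and then the chain $a\le e(L',Z^{2n}(a),\om_0)\le a-\delta$ gives the required contradiction. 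This completes the proof of Claim \ref{claim:U}, and together with Claim \ref{claim:u} establishes Proposition \ref{prop:L}.
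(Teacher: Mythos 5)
Your overall strategy (Chekanov's bound, invariance of the minimal area, compactness of $\phi(L)$ to get a strictly smaller sub-cylinder) parallels the paper's, but as written there are two genuine gaps, the first of which is fatal. When you invoke Lemma \ref{le:A N} you check the wrong hypothesis: the lemma requires every loop in $N=L$ to be contractible in the \emph{domain} $M=U$ of the embedding, not in the target or in $\R^{2n}$. Your justification (``every loop in $X$ bounds in the contractible ambient space'') only gives contractibility in $\R^{2n}$, which is automatic and cannot suffice: if it did, the same reasoning applied to the Weinstein-neighborhood embedding taking $L$ to a rescaled copy $cL$ would give $A(\R^{2n},\om_0,cL)=A(\R^{2n},\om_0,L)$, i.e.\ $c^2A(L)=A(L)$, which is absurd. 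Symptomatically, your proof never uses the cones $u(S^2)$, i.e.\ never distinguishes a neighborhood of $X$ from a neighborhood of $L$ alone; but Claim \ref{claim:U} is \emph{false} for $L$ alone, since a neighborhood of any closed Lagrangian embeds symplectically into an arbitrarily small ball. The paper's proof spends its entire first half on exactly this point: any loop of $L$ is homotopic in $L$ to a concatenation of the generators in $\LL$, each of which is contracted through its cone, and the cones lie in $X\sub U$; only then is Lemma \ref{le:A N} applied --- twice, once for $\phi$ and once for the inclusion $U\iinj\R^{2n}$ (you also need this second application, which you skip).

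The second gap is that you run both Chekanov's theorem and the displacement estimate \emph{inside the open cylinder}. First, Theorem \ref{thm:e} requires the ambient manifold to be geometrically bounded; you assert this for $Z^{2n}(a)$ without argument, and it is not clear (it certainly does not follow from being symplectomorphic to an open subset of $\R^{2n}$: the finite-volume ball is \emph{not} geometrically bounded). Second, your bound $e\big(\phi(L),Z^{2n}(a),\om_0\big)\le a-\delta$ is unjustified: translating the first factor off $B^2(a)$ is not a Hamiltonian isotopy \emph{of the cylinder}, since it does not map $Z^{2n}(a)$ into itself, so it is inadmissible in the definition of displacement energy with ambient manifold $Z^{2n}(a)$. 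Monotonicity moreover goes the wrong way: Hamiltonians admissible on the cylinder extend (after cutoff considerations) to $\R^{2n}$, so $e\big(\phi(L),Z^{2n}(a),\om_0\big)\ge e\big(\phi(L),\R^{2n},\om_0\big)$, and the standard area bound for displacing the cylinder as a \emph{subset} of $\R^{2n}$ tells you nothing about displacing inside the cylinder. Both problems vanish if you do what the paper does: apply Theorem \ref{thm:e} with ambient manifold $\R^{2n}$, and chain $A\big(\R^{2n},\om_0,\phi(L)\big)\le e\big(\phi(L),\R^{2n},\om_0\big)\le e\big(Z^{2n}(a'),\R^{2n},\om_0\big)\le a'$ together with the compactness observation $\phi(L)\sub Z^{2n}(a')$ for some $a'<A(\R^{2n},\om_0,L)$ --- the one step of your argument that does coincide with the paper's.
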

\begin{pf}[Proof of Claim \ref{claim:U}] In order to apply Lemma \ref{le:A N}, we check that every continuous loop in $L$ is contractible in $U$. Let $x$ be such a loop. It follows from our choice of the set $\LL$ that there exist a collection of loops $y_1,\ldots,y_\ell\in \LL$ and signs $\eps_1,\ldots,\eps_\ell\in\{1,-1\}$, such that $x$ is homotopic inside $L$ to $y_1^{\eps_1}\#\cdots\#y_\ell^{\eps_\ell}$. Here $\#$ denotes concatenation of loops based at $x_0$, and $y_i^{-1}$ denotes the time-reversed loop $y_i$. Since $X$ contains the image of the map $[0,1]\x S^1\ni(t,z)\mapsto ty_i(z)\in\R^{2n}$, for every $i=1,\ldots,\ell$, it follows that $x$ is contractible in $X$, and hence in $U$. Therefore, the hypotheses of Lemma \ref{le:A N} are satisfied with $\big(M,\om,M',\om',N\big):=\big(U,\om_0|U,\R^{2n},\om_0,L\big)$. (Here $\om_0|U$ denotes the restriction of $\om_0$ to $U$.) Applying this result, it follows that 
\begin{equation}\label{eq:A U phi L}A\big(U,\om_0|U,L\big)=A\big(\R^{2n},\om_0,\phi(L)\big).
\end{equation}
Similarly, applying Lemma \ref{le:A N} with $\phi$ replaced by the inclusion map of $U$ into $\R^{2n}$, we have
\begin{equation}\label{eq:A U}A(\R^{2n},\om_0,L)=A\big(U,\om_0|U,L\big).
\end{equation}
By Theorem \ref{thm:e}, we have
\begin{equation}\label{eq:A e}A\big(\R^{2n},\om_0,\phi(L)\big)\leq e\big(\phi(L),\R^{2n},\om_0\big).
\end{equation}
An elementary argument shows that 
\[e\big(Z^{2n}(a),\R^{2n},\om_0\big)\leq a,\quad\forall a>0.\]
Combining this with (\ref{eq:A U phi L},\ref{eq:A U},\ref{eq:A e}), it follows that 
\begin{equation}\label{eq:A a}A(\R^{2n},\om_0,L)\leq a,\quad\forall a>0\textrm{ such that }\phi(L)\sub Z^{2n}(a).
\end{equation}
Assume by contradiction that $\phi(U)\sub Z^{2n}\big(A(\R^{2n},\om_0,L)\big)$. Since $L$ is compact and contained in $U$, it follows that $\phi(L)\sub Z^{2n}(a)$ for some number $a<A(\R^{2n},\om_0,L)$. This contradicts (\ref{eq:A a}). The statement of Claim \ref{claim:U} follows. This proves Proposition \ref{prop:L}.
\end{pf}
\end{proof}
In the proof of Theorem \ref{thm:X} we will apply Proposition \ref{prop:L} with a rotated and rescaled version of the Lagrangian submanifold
\begin{equation}\label{eq:L}L:=\big\{zq\,\big|\,z\in S^1\sub\C,\,q\in S^{n-1}\sub\R^n\big\}\subseteq\C^n.
\end{equation}
This submanifold was used by L.~Polterovich in \cite[Section 3]{Polterovich} as an example of a monotone Lagrangian in $\C^n$ with minimal Maslov number $n$. Previously, it was considered by A.~Weinstein in \cite[Lecture 3]{We} and M.~Audin in \cite[p.~620]{Audin}. 
\begin{lemma}\label{lem:minimalareaL}
For $n\geq2$ the minimal symplectic area of the Lagrangian $L$ in $\R^{2n}$ equals $\frac{\pi}{2}$.
\end{lemma}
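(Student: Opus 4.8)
The plan is to compute the symplectic area spectrum $S(L)$ directly from the definition \eqref{eq:S L}, since $L$ is connected and hence has a single isotropic leaf equal to all of $L$. Thus I need to understand which homotopy classes of loops in $L$ bound disks in $\C^n$, and what symplectic areas those disks carry. First I would determine the fundamental group $\pi_1(L)$. The map $S^1 \times S^{n-1} \to L$, $(z,q) \mapsto zq$, is a double cover for $n \geq 2$, with deck transformation $(z,q) \mapsto (-z,-q)$, because $zq = z'q'$ forces $(z',q') = \pm(z,q)$. Since $S^{n-1}$ is simply connected for $n \geq 3$ and $\pi_1(S^1 \times S^{n-1}) = \Z$ is generated by the $S^1$-factor, I would check that $\pi_1(L) \cong \Z$ generated by a loop $\gamma$ lifting to a path from $(1,q_0)$ to $(-1,-q_0)$ — i.e., the half-loop $t \mapsto e^{\pi i t} q_0$ in $\C^n$. (The case $n=2$ needs separate care since $S^1$ is not simply connected, but one checks the generator is the same.)

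Next I would compute the area of a disk bounded by this generating loop. The natural capping disk for $\gamma(t) = e^{\pi i t} q_0$, $t \in [0,1]$, is obtained by shrinking: for instance $u(r,t) = r\, e^{\pi i t} q_0$ traces a disk whose boundary is $\gamma$ together with a radial segment, but since $\gamma$ is not itself closed in $S^1 \times S^{n-1}$ I must work with the actual loop in $L$, which is a closed loop because $e^{\pi i \cdot 1} q_0 = -q_0 = (-1)\cdot q_0$ and $(-1,q_0)$ maps to the same point of $L$ as... here I must be careful: the point is that $\gamma$ closes up in $L$ precisely because of the identification. I would then evaluate $\int_\D u^*\om_0$ for an explicit filling. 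Parametrizing a loop $c(t) = e^{\pi i t} q$ and capping it by $u(r,t) = \rho(r) e^{\pi i t} q$ with $\rho(0)=0$, $\rho(1)=1$, a direct computation in $\C^n$ with $\om_0 = \sum dx_j \wedge dy_j$ gives the area of the half-turn loop. Since $|q|=1$, one half-rotation of the unit vector $q$ in $\C^n$ sweeps out area $\tfrac12 \cdot \pi |q|^2 = \tfrac{\pi}{2}$; the full generator of $\pi_1$ yields exactly $\tfrac{\pi}{2}$, and multiples yield $\tfrac{\pi}{2}\Z$. This identifies $S(L) = \tfrac{\pi}{2}\Z$ (using asphericity of $\C^n$ so that the filling choice only matters up to sphere contributions, which vanish), whence $A(L) = \inf(S(L) \cap (0,\infty)) = \tfrac{\pi}{2}$.

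The main obstacle I anticipate is twofold. First, correctly identifying the generator of $\pi_1(L)$ and verifying that the ``half-turn'' loop is not null-homotopic in $L$ while its square (a full turn) is the boundary of a loop that lifts to a genuine loop upstairs — this is exactly where the double-cover structure and the factor of $\tfrac12$ enter, and getting the minimal nonzero area (as opposed to $\pi$, the area of a full turn) hinges on this. Second, justifying that every disk $u \in C^\infty(\D, \C^n)$ with $u(S^1) \subseteq L$ has area lying in $\tfrac{\pi}{2}\Z$: since $\C^n$ is contractible, the area of such a disk depends only on the homotopy class of the boundary loop in $L$ (two fillings of homotopic loops differ by a sphere, which has zero area as $\C^n$ is aspherical), so the area spectrum is the image of a homomorphism $\pi_1(L) \cong \Z \to \R$, and I need only evaluate it on the generator. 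The computation of that single value via the explicit radial capping is the concrete calculation I would carry out carefully.
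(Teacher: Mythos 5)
Your overall strategy is viable and genuinely different from the paper's: you reduce $S(L)$ to the image of a homomorphism $\pi_1(L)\to\R$ (well-defined since $\C^n$ is contractible) and then evaluate it on a generator by one explicit capping, whereas the paper integrates the Liouville form $\al=q\cdot dp$ over an \emph{arbitrary} loop of $L$, lifted through the covering $\R\x S^{n-1}\ni(\phi,q)\mapsto e^{i\phi}q\in L$, obtaining the value $\tfrac{\phi(1)-\phi(0)}{2}$ directly and never needing $\pi_1(L)$ itself. However, your execution fails exactly at the step you flagged as delicate. The curve $\ga(t)=e^{\pi it}q_0$ with \emph{constant} $q_0$ is not a closed loop in $L$: its endpoints $q_0$ and $-q_0$ are distinct points of $\C^n$, hence distinct points of $L$. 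The identification $(z,q)\sim(-z,-q)$ in the double cover identifies the two \emph{preimages} of a single point of $L$; it does not identify $q_0$ with $-q_0$ downstairs. Equivalently, the lift $t\mapsto(e^{\pi it},q_0)$ ends at $(-1,q_0)$, which is \emph{not} the deck translate $(-1,-q_0)$ of the starting point $(1,q_0)$ --- so this path does not descend to a loop, and it does not represent the generator you correctly described in covering-space terms. Consequently the capping $u(r,t)=\rho(r)e^{\pi it}q_0$ is a flat half-disk in the complex line $\C q_0$ whose boundary contains the diameter from $-q_0$ to $q_0$, which does not lie in $L$; it therefore does not exhibit $\tfrac\pi2$ as an element of $S(L)$, and the key computation pinning down $\tfrac\pi2$ (rather than $\pi$) is unsupported.

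The repair is forced by the deck transformation: the spherical coordinate must travel to its antipode simultaneously. Choose a smooth path $q\colon[0,1]\to S^{n-1}$ with $q(1)=-q(0)$ (here $n\geq2$ enters, since $S^{n-1}$ must be connected) and set $x(e^{2\pi it})=e^{\pi it}q(t)$; this closes up because $e^{\pi i}q(1)=q(0)$, and it lifts to a path from $(1,q(0))$ to $(-1,-q(0))$, i.e.\ it is the true generator --- and it is precisely the loop the paper writes down. Capping it by the cone $u(r,e^{2\pi it})=r\,e^{\pi it}q(t)$, one gets $\om_0(\dd_ru,\dd_tu)=\IM\big(\pi ir|q|^2+r\,q\cdot\dot q\big)=\pi r$, using that $q$ is real-valued with $|q|=1$ and $q\cdot\dot q=0$; hence the area is $\int_0^1\!\int_0^1\pi r\,dr\,dt=\tfrac\pi2$. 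Your heuristic answer was right only because the $\dot q$-term drops out of this integral. Two further points need attention. First, homotopy invariance of the area is not purely a sphere/asphericity statement: for homotopic (not equal) boundary loops one must also glue in the homotopy cylinder, whose area vanishes because it lies in the \emph{Lagrangian} $L$, not because $\C^n$ is aspherical. Second, for $n=2$ the manifold $L$ is a torus and $\pi_1(L)\iso\Z^2$, so ``the generator is the same'' is insufficient; you must also evaluate the homomorphism on the second generator (constant $S^1$-coordinate, $q$ traversing $S^1$ once), which has area $0$, so the image is still $\tfrac\pi2\Z$. With these repairs your argument does yield $S(L)=\tfrac\pi2\Z$ and hence $A(L)=\tfrac\pi2$.
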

\begin{proof}[Proof of Lemma \ref{lem:minimalareaL}] Let $n\geq2$. Recall the formula (\ref{eq:S L}) for the area spectrum $S(L)$. We write a point in $\R^{2n}$ as $(q,p)$, and denote by $\al:=q\cdot dp$ the Liouville one-form. Since $d\al=\om_0$, Stokes' theorem implies that 
\begin{equation}\label{eq:S L wt S L}S(L)=\wt S(L):=\left\{\int_{S^1}x^*\al\,\big|\,x\in C^\infty(S^1,L)\right\}.\end{equation}
To calculate $\wt S(L)$, we need the following.
\begin{claimnonumber} If $x:S^1\to L$, $\phi:[0,1]\to\R$, and $q:[0,1]\to S^{n-1}$ are smooth maps, such that
\begin{equation}\label{eq:x}x(e^{2\pi it})=e^{i\varphi(t)}q(t),\quad\forall t\in[0,1],\end{equation}
then we have
\begin{equation}\label{eq:x al}\int_{S^1}x^*\al=\frac{\phi(1)-\phi(0)}2.\end{equation}
\end{claimnonumber}
\begin{proof}[Proof of the claim] We have $|q|^2=1$ and $q\cdot\dot q=0$, and therefore,
\begin{eqnarray}
\nonumber\int_{S^1}x^*\al&=&\int_0^1\RE\big(e^{i\varphi}q\big)\cdot\IM\big(e^{i\varphi}(i\dot\varphi q+\dot q)\big)\,dt\\
\nonumber&=&\int_0^1\cos(\phi)^2\dot\phi\,dt\\
\label{eq:sin}&=&\left.\left(\frac14\sin(2\phi(t))+\frac{\phi(t)}2\right)\right|_{t=0}^1.
\end{eqnarray}
On the other hand, equality (\ref{eq:x}) implies that $\phi(1)-\phi(0)\in\pi\Z$, and therefore, the first term in (\ref{eq:sin}) vanishes. Equality (\ref{eq:x al}) follows. This proves the claim. 
\end{proof}
We show that $\wt S(L)\sub\frac\pi2\Z$: Let $x\in C^\infty(S^1,L)$. The map $\R\x S^{n-1}\ni(\phi,q)\mapsto e^{i\phi}q\in L\sub\C^n$ is a smooth covering map. Therefore, there exist smooth paths $\varphi\colon[0,1]\to\R$ and $q\colon[0,1]\to S^{n-1}$ such that equality (\ref{eq:x}) holds. It follows that $\phi(1)-\phi(0)\in\pi\Z$. Combining this with the claim, we obtain $\int_{S^1}x^*\al\in\frac\pi2\Z$. This shows that $\wt S(L)\sub\frac\pi2\Z$.

To prove the opposite inclusion, we choose a path $q\in C^\infty([0,1],S^{n-1})$ that is constant near the ends and satisfies $q(1)=-q(0)$. (Here we use that $n\geq2$, and therefore, $S^{n-1}$ is connected.) We define $x:S^1\to L$ by $x(e^{2\pi it}):=e^{\pi it}q(t)$, for $t\in[0,1)$. This is a smooth loop. By the above claim we have $\int_{S^1}x^*\al=\pi/2$. By considering multiple covers of $x$, it follows that $\wt S(L)\supseteq\frac\pi2\Z$.

Hence the equality $\wt S(L)=\frac\pi2\Z$ holds. Combining this with equality (\ref{eq:S L wt S L}), it follows that $A(L)=\pi/2$. This proves Lemma \ref{lem:minimalareaL}.
\end{proof} 
\begin{proof}[Proof of Theorem \ref{thm:X}] Let $n\geq2$. We define $L$ as in (\ref{eq:L}), and 
\begin{align*}&\wt L:=\\
&\big\{\sqrt2zw\,\big|\,z\in S^1\sub\C,\,w\in S^{2n-1}\sub\C^n:\,w_{n+1-j}=\BAR w_j,\,\forall j=1,\ldots,n\big\}.
\end{align*}
\begin{claimnonumber} There exists a unitary transformation $U$ of $\C^n$, such that $\wt L=\sqrt2UL$.
\end{claimnonumber}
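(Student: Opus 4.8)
The plan is to realize $\wt L$ as the image of $\sqrt2\,L$ under a unitary transformation by identifying the real subspace that the symmetry condition cuts out and recognizing it as Lagrangian. Set
\[V:=\big\{w\in\C^n\,\big|\,w_{n+1-j}=\BAR{w_j},\,\forall j=1,\ldots,n\big\}.\]
First I would observe that $V$ is a real-linear subspace of $\C^n$ of real dimension $n$: the constraints pair up the coordinates $w_j$ and $w_{n+1-j}$, leaving $w_1,\ldots,w_{\lfloor n/2\rfloor}$ free as complex numbers and, when $n$ is odd, forcing the middle coordinate to be real, which gives real dimension $n$ in both parities. Writing $W:=V\cap S^{2n-1}$, the defining set of $\wt L$ rewrites as $\frac1{\sqrt2}\wt L=\{zw\mid z\in S^1,\,w\in W\}=S^1\cdot W$.

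The key step is to show that $V$ is a Lagrangian subspace of $(\C^n,\om_0)$. For $v,w\in V$ I would compute the Hermitian product $\langle v,w\rangle=\sum_{j=1}^n v_j\BAR{w_j}$ and reindex the sum by $j\mapsto n+1-j$; using $v_{n+1-j}=\BAR{v_j}$ and $w_{n+1-j}=\BAR{w_j}$, the sum transforms into its own complex conjugate, so $\langle v,w\rangle\in\R$. In particular $\om_0(v,w)=\Imp\langle v,w\rangle=0$, so $V$ is isotropic of real dimension $n$, hence Lagrangian.

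Since $U(n)$ acts transitively on the Lagrangian Grassmannian $\Lambda_n=U(n)/O(n)$ and $\R^n$ is Lagrangian, there exists a unitary transformation $U$ of $\C^n$ with $U(\R^n)=V$. (Alternatively, one may write $U$ down explicitly as a block map sending each pair $(e_j,e_{n+1-j})$ into $V$ via a fixed $2\times2$ unitary and fixing the middle coordinate when $n$ is odd.) Because $U$ is a complex-linear isometry, it maps the unit sphere $S^{n-1}\sub\R^n$ onto $V\cap S^{2n-1}=W$, and by complex linearity $U(zq)=zU(q)$ for all $z\in S^1$. Hence $UL=U(S^1\cdot S^{n-1})=S^1\cdot U(S^{n-1})=S^1\cdot W=\frac1{\sqrt2}\wt L$, which is exactly $\sqrt2\,UL=\wt L$.

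The main obstacle, and the only step requiring genuine computation, is verifying that $V$ is Lagrangian; everything else is bookkeeping with the $S^1$-action and the standard transitivity of $U(n)$ on Lagrangian subspaces. I would be careful with the sign convention for $\om_0$ relative to $\langle\cdot,\cdot\rangle$, with the dimension count in both parities of $n$, and with the fact that the constraints for the indices $j$ and $n+1-j$ are equivalent, so that $V$ is cut out by precisely the expected number of independent real equations.
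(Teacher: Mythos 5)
Your proposal is correct and follows essentially the same route as the paper: identify the linear subspace $W$ cut out by the symmetry condition, observe that it is a Lagrangian subspace of $\C^n$, and invoke the transitivity of the unitary group on Lagrangian subspaces to find $U$ with $U\R^n=W$. The only difference is one of detail: the paper asserts without computation that $W$ is Lagrangian and that any such $U$ works, whereas you spell out the Hermitian-form calculation and the final bookkeeping $UL=S^1\cdot(W\cap S^{2n-1})=\tfrac1{\sqrt2}\wt L$, both of which are correct.
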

\begin{proof}[Proof of the claim] The set 
\[W:=\big\{w\in\C^n\,\big|\,w_{n+1-j}=\BAR w_j,\,\forall j=1,\ldots,n\big\}\] 
is a Lagrangian subspace of $\C^n$. Therefore, there exists a unitary transformation $U$ of $\C^n$, such that $W=U\R^n$. The statement of the claim holds for every such $U$.
\end{proof}
We choose $U$ as in the claim. Since $U$ is a symplectic linear map, the set $\wt L$ is a Lagrangian submanifold of $\C^n$, and satisfies
\[A(\C^n,\om_0,\wt L)=2A(\C^n,\om_0,L).\]
By Lemma \ref{lem:minimalareaL} the right hand side equals $\pi$. Therefore, applying Proposition \ref{prop:L}, it follows that there exists a smooth map $u:S^2\to[0,1]\cdot\wt L$, such that the union $X:=\wt L\cup u(S^2)$ does not symplectically embed into the cylinder $Z^{2n}$. The set $X$ is contained in $\BAR B^{2n}(2\pi)$, since $\wt L\sub\BAR B^{2n}(2\pi)$.

Let $\wt w\in\wt L$. We choose $z\in S^1$ and $w\in S^{2n-1}$, such that $w_{n+1-j}=\BAR w_j$, for all $j$, and $\wt w=\sqrt2zw$. If $j\in\{1,\ldots,n\}$ is an index such that $j\neq\frac{n+1}2$, then we have
\[|\wt w_j|^2=2|w_j|^2=|w_j|^2+|w_{n+1-j}|^2\leq|w|^2=1.\]
Therefore, if $n$ is even then $\wt L$, and hence $X$ is contained in $\D^n$. It follows that $X$ has all the required properties in this case. Consider the case in which $n$ is odd. We denote $n=:2k+1$ and define 
\[\Psi:\C^n\to\C^n,\quad \Psi(w):=\big(w_1,\ldots,w_k,w_{k+2},\ldots,w_n,w_{k+1}\big).\]
It follows that $\Psi(\wt L)$ is contained in $\D^{n-1}\x\C$, and hence the same holds for $\Psi(X)$. Therefore, $\Psi(X)$ has the required properties. This proves Theorem \ref{thm:X}. 
\end{proof}
\subsection{Proof of Proposition \ref{prop:2n-1}}
The proof of this result is based on the following. Let $n\in\N$ and $U\sub\R^n$ be an open set. We denote by $|U|$ the volume of $U$. 
\begin{lemma}\label{le:volume} For every $c>|U|$ there exists an orientation and volume preserving embedding of $U$ into the open ball (around $0$) of volume $c$. 
\end{lemma}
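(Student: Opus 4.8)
The natural approach is Moser's method, with the non-compactness of $U$ handled by an exhaustion that exploits the strict inequality $c>|U|$. Write $B$ for the open ball of volume $c$ around $0$ and $\omega:=dx_1\wedge\cdots\wedge dx_n$ for the standard volume form. A convenient first reduction is to fix an orientation-preserving \emph{radial} diffeomorphism $g\colon\R^n\to B$ and to pull back the volume form, setting $\mu:=g^*\omega$. Then $\mu$ is a positive volume form on $\R^n$ with $\int_{\R^n}\mu=\int_B\omega=c$, and it suffices to produce an orientation- and volume-preserving embedding $\psi\colon(U,\omega)\to(\R^n,\mu)$, i.e.\ an embedding with $\psi^*\mu=\omega$: the composition $\phi:=g\circ\psi$ then satisfies $\phi^*\omega=\psi^*\mu=\omega$ and automatically maps $U$ into $B$. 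The point of this reduction is that it trades the troublesome boundary $\partial B$ of the target for the decay of $\mu$ at infinity, so that both source and target now have $\R^n$ as underlying space and the only ``end'' of $B$ becomes the single end of $(\R^n,\mu)$.

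To construct $\psi$, I would exhaust $U$ by relatively compact open sets with smooth boundary, $U_1\Subset U_2\Subset\cdots$ with $\bigcup_k U_k=U$, and build $\psi$ as a limit of orientation-preserving embeddings $\psi_k\colon U_k\to\R^n$ satisfying $\psi_k^*\mu=\omega$ and $\psi_{k+1}|_{U_{k-1}}=\psi_k|_{U_{k-1}}$, so that $\psi:=\lim_k\psi_k$ is well defined, smooth, injective, and volume preserving. At the inductive step the image $\psi_k(U_k)$ has $\mu$-volume exactly $\int_{U_k}\psi_k^*\mu=|U_k|$; since $|U_k|\le|U|<c$, there remains $\mu$-volume $c-|U_k|>0$ available near infinity into which the new shell $U_{k+1}\wo\BAR{U_{k-1}}$ can be flowed. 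Matching the $\omega$-volume of this shell with the $\mu$-volume of a suitable region disjoint from $\psi_k(U_{k-1})$, and solving the Moser equation for a primitive $\beta$ of the difference of the two relevant volume forms with $\beta$ supported in the shell, yields the extension while leaving $\psi_k$ unchanged on $U_{k-1}$. This inductive package is essentially the volume-preserving embedding theorem of Greene and Shiohama for noncompact manifolds, and the strict inequality $c>|U|$ is exactly the slack needed to satisfy its end-matching hypothesis.

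I expect the main obstacle to lie in the end analysis of this inductive step, where the volume bookkeeping must be organized shell by shell so that (i) the $\omega$-volume of each new shell equals the $\mu$-volume of its target region, the total slack $c-|U|$ being consumed in a summable fashion; (ii) the successive extensions remain globally injective and the limit stays off the deleted tail, so that $\phi(U)$ lies inside the \emph{open} ball $B$ rather than accumulating on $\partial B$; and (iii) orientation is preserved throughout, which is automatic since every map is built from the radial diffeomorphism $g$ and from time-one Moser flows, each isotopic to the identity. Finally, if $U$ is disconnected one first allocates to its countably many components $U_i$ disjoint concentric shells of $B$ of prescribed volumes summing to less than $c$, and applies the connected case to each; this is possible because $\sum_i|U_i|=|U|<c$ leaves room to spare.
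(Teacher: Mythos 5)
Your proposal is viable and, at bottom, runs on the same engine as the paper's proof --- Moser isotopy on noncompact manifolds in the form worked out by Greene and Shiohama \cite{GS} --- but it packages that engine quite differently. The paper reduces the lemma to the \emph{diffeomorphism} version of Greene--Shiohama: after reducing to connected $U$, it produces (via Remark \ref{rmk:phi U}) an orientation preserving embedding of $U$ into the ball of volume $c$ whose image contains the ball of volume $|U|$, composes with a shrinking homothety so that the image has volume \emph{exactly} $|U|$, and then applies \cite[Theorem 1]{GS} to the two forms $\Om$ and $\psi^*\Om$ on $U$, which now have equal finite total volume; the resulting diffeomorphism $\chi$ of $U$ with $\chi^*\psi^*\Om=\Om$ finishes the proof. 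All noncompact bookkeeping is thus outsourced to a single citation, and the only new idea needed is this volume-matching trick. You instead normalize the \emph{target} (the radial diffeomorphism $g\colon\R^n\to B$, a nice move that does dispose of the boundary-accumulation worry once and for all) and then reconstruct the \emph{embedding} theorem of Greene--Shiohama by exhaustion. If you take that embedding theorem off the shelf --- as your closing sentence of the inductive discussion effectively does --- your argument is complete and is a genuine alternative; what your route buys is self-containedness in principle, and what it costs is that the entire difficulty of the lemma is now concentrated in your inductive step.

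Two points in that step deserve warning flags if you intend to carry it out rather than cite it. First, the Moser primitive $\beta$ cannot be taken ``supported in the shell'' as you claim: the pulled-back form and the standard form agree near the \emph{inner} boundary of the shell (where the extension coincides with $\psi_k$), but there is no reason for them to agree near the outer boundary; moreover the shell $U_{k+1}\setminus\overline{U_{k-1}}$ may be disconnected, so the volume matching must be arranged separately on each component, redistributing the slack among components while keeping their images disjoint --- this is precisely the delicate part of \cite{GS}, not a routine application of the compact Moser argument. Second, your reduction of the disconnected case does not literally reduce to your connected case: a concentric shell of $B$ is not diffeomorphic to $\R^n$ (and for $n=1$ it is not even connected), so you must either prove the connected case for an arbitrary connected target of larger volume, or instead join the components of $U$ by thin tubes of total volume less than $c-|U|$ and apply the connected case once.
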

The proof of this lemma is based on the following observation. For $r>0$ we denote by $B^n_r\sub\R^n$ the open ball (around $0$) of radius $r$.
\begin{rmk}\label{rmk:phi U} Let $U\sub\R^n$ be a non-empty open set, and $r>r_0>0$ real numbers. Then there exists an orientation preserving embedding $\phi$ of $U$ into the open ball in $\R^n$ of radius $r$, such that $B^n_{r_0}\sub\phi(U)$. This follows from an elementary argument.
\end{rmk}
\begin{proof}[Proof of Lemma \ref{le:volume}] By an elementary argument, we may assume without loss of generality that $U$ is connected and non-empty. It follows from Remark \ref{rmk:phi U} that there exists an orientation preserving embedding $\phi$ of $U$ into the open ball of volume $c$, such that the ball of volume $|U|$ is contained in $\phi(U)$. This condition ensures that $|\phi(U)|>|U|$. Hence composing $\phi$ with a shrinking homothety of $\R^n$, we obtain an orientation preserving embedding $\psi$ of $U$ into the ball of volume $c$, such that $|\psi(U)|=|U|$. Denoting by $\Om$ the standard volume form on $\R^n$, this means that $\int_U\Om=\int_U\psi^*\Om$. Therefore, a theorem by R.~Greene and K.~Shiohama (\cite[Theorem 1]{GS}) implies that there exists a diffeomorphism $\chi:U\to U$ such that $\chi^*\psi^*\Om=\Om$. (Here we use that $\int_U\Om<\infty$. The result is based on Moser isotopy.) The map $\psi\circ\chi$ has the required properties. This proves Lemma \ref{le:volume}.
\end{proof}
\begin{proof}[Proof of Proposition \ref{prop:2n-1}] Let $n\in\N$ and $X$ be a compact subset of $\BAR B^{2n}$ with vanishing $(2n-1)$-dimensional Hausdorff measure. Then $X$ does not contain $S^{2n-1}$, and hence there exists an orthogonal linear symplectic map $\Psi\colon\R^{2n}\to\R^{2n}$, such that $(1,0,\ldots,0)\not\in\Psi(X)$. Since $\Psi(X)$ is compact and contained in $\BAR B^{2n}$, an elementary argument shows that there exists $c<1$, such that
\begin{equation}\label{eq:phi X}\Psi(X)\sub Y:=\big\{(q,p)\in\D\mid q<c\big\}\x\R^{2n-2}.
\end{equation}
We choose an open neighborhood $U$ of $\big\{(q,p)\in\D\mid q<c\big\}$ of area less than $\pi$. By Lemma \ref{le:volume} $U$ symplectically embeds into the open unit ball in $\R^2$. Using \eqref{eq:phi X}, it follows that $\Psi(X)$ symplectically embeds into $Z^{2n}$. Hence the same holds for $X$. This proves Proposition \ref{prop:2n-1}.
\end{proof}
\subsection{Proof of Theorem \ref{thm:coiso} (Regular coisotropic capacity)}
The idea is to consider the Lagrangian submanifold $L$ defined in \eqref{eq:L} (for inequality (\ref{thm:ineq1})) and a product of it with a sphere (for inequality (\ref{thm:ineq2})). We need the following result. Recall the definition of the area spectrum (\ref{eq:S M om N}). 
\begin{lemma}\label{le:productactionspectrum}
Let $(M,\om)$ and $(M',\om')$ be symplectic manifolds, and $N\subseteq M$ and $N'\subseteq M'$ coisotropic submanifolds. Then 
\begin{eqnarray*}
S(M\times M',\omega\oplus\omega',N\times N')=S(M,\omega,N)+S(M',\omega',N').
\end{eqnarray*}
\end{lemma}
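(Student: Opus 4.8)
The plan is to prove the set equality by establishing both inclusions, working directly from the definition \eqref{eq:S M om N} of the area spectrum. The central structural fact I would exploit is that the isotropic foliation of a product coisotropic submanifold $N\times N'$ splits: the isotropic leaf of $N\times N'$ through a point $(x,x')$ is exactly the product $F\times F'$, where $F$ is the isotropic leaf of $N$ through $x$ and $F'$ is the isotropic leaf of $N'$ through $x'$. This follows because for the product symplectic form $\om\oplus\om'$ one has $T_{(x,x')}(N\times N')^{\om\oplus\om'}=(T_xN)^\om\times(T_{x'}N')^{\om'}$, so that a path stays tangent to the characteristic distribution of the product if and only if each of its two components stays tangent to the respective characteristic distribution. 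I would state and verify this splitting as the first step, since every subsequent argument about which $S^1$-maps land in a single leaf rests on it.

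For the inclusion $S(M,\om,N)+S(M',\om',N')\subseteq S(M\times M',\om\oplus\om',N\times N')$, I would take two disk maps $u\in C^\infty(\D,M)$ and $u'\in C^\infty(\D,M')$ whose boundary loops lie in leaves $F$ and $F'$ respectively, and produce a single disk map into the product realizing the sum of their symplectic areas. The naive product map $\D\ni(z)\mapsto(u(z),u'(z))$ does not work directly since its restriction to $S^1$ need not trace out a loop inside $F\times F'$ in the required sense, but in fact $u(S^1)\subseteq F$ and $u'(S^1)\subseteq F'$ immediately give $(u,u')(S^1)\subseteq F\times F'$, which is a leaf of the product by the splitting above. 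Then $\int_\D(u,u')^*(\om\oplus\om')=\int_\D u^*\om+\int_\D u'^*\om'$ by naturality of the direct sum under the diagonal pullback, so every element of the left-hand sum set is attained. This direction is essentially formal once the splitting is in place.

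The reverse inclusion is where the real work lies. Given a disk map $v\in C^\infty(\D,M\times M')$ with $v(S^1)$ contained in a single leaf $F\times F'$ of the product, I would write $v=(v_1,v_2)$ with $v_i$ mapping into $M$ and $M'$ respectively, so that $v_1(S^1)\subseteq F$ and $v_2(S^1)\subseteq F'$, and observe $\int_\D v^*(\om\oplus\om')=\int_\D v_1^*\om+\int_\D v_2^*\om'$, with the first summand in $S(M,\om,N)$ and the second in $S(M',\om',N')$. The main obstacle I anticipate is purely definitional rather than geometric: the area spectrum is defined via a single disk $\D$, and the decomposition $\int_\D v^*(\om\oplus\om')=\int_\D v_1^*\om+\int_\D v_2^*\om'$ holds tautologically because the projections $M\times M'\to M$ and $M\times M'\to M'$ pull the summands of $\om\oplus\om'$ back to $\om$ and $\om'$; the only thing to check carefully is that $v_1(S^1)$ genuinely lies in a single isotropic leaf $F$ of $N$ (and likewise for $v_2$), which is again guaranteed by the leaf-splitting of the first step. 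I would therefore allocate most of the care to the first step and treat the two inclusions as short consequences, noting that no compactness, regularity, or asphericity hypotheses are needed here—only the pointwise splitting of characteristic subspaces under the direct sum.
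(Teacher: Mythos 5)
Your proof is correct, but note that it cannot be compared against an in-paper argument in the usual sense: the paper's ``proof'' of this lemma consists of a single citation to \cite[Remark 31]{SZSmall}, so the paper itself records no argument at all. What you wrote is the natural self-contained proof that the cited remark presumably contains: the pointwise identity $\big(T_{(x,x')}(N\times N')\big)^{\om\oplus\om'}=(T_xN)^\om\times(T_{x'}N')^{\om'}$ (verified by testing against vectors of the form $(w,0)$ and $(0,w')$), hence the identification of every isotropic leaf of $N\times N'$ with a product $F\times F'$ of isotropic leaves, and then the tautological splitting $\int_\D v^*(\om\oplus\om')=\int_\D v_1^*\om+\int_\D v_2^*\om'$ coming from $\om\oplus\om'$ being the sum of the two pullbacks under the projections; the two inclusions follow formally, with no compactness, regularity, or asphericity needed, exactly as you say. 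Two small points of polish. First, for the inclusion of $F\times F'$ into the leaf through $(x,x')$ you must convert a \emph{pair} of admissible paths $x(t)$, $x'(t)$ into a single admissible path in the product; the pair path $t\mapsto(x(t),x'(t))$ does this directly, and since both are parametrized on $[0,1]$ no concatenation or reparametrization is required---this is implicit in your ``if and only if'' statement about component paths but merits one explicit line, as it is the only place where the leaf equality (rather than just one inclusion) is used. Second, your sentence claiming the naive product map ``does not work directly'' and then immediately showing that it does is contradictory in exposition only; the product map works exactly as you conclude, so the first half of that sentence should simply be deleted.
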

\begin{proof} We refer to \cite[Remark 31]{SZSmall}.
\end{proof}  
\begin{proof}[Proof of Theorem \ref{thm:coiso}] To prove {\bf inequality \eqref{thm:ineq1}}, we define $L$ as in \eqref{eq:L}. Let $r<1$. Then $rL$ is a closed Lagrangian submanifold of $B^{2n}$. Furthermore, condition (\ref{eq:contractible}) is satisfied with $(M,\om):=(B^{2n},\om_0)$, since $B^{2n}$ is contractible. An elementary argument using Lemmas \ref{lem:minimalareaL} and \ref{le:A N}, shows that $A(B^{2n},\om_0,rL)=\frac\pi2r^2$. Therefore, for every $r<1$ we have $A_\Lag(B^{2n},\om_0)\geq\frac\pi2r^2$. Inequality \eqref{thm:ineq1} follows. 

We prove {\bf inequality \eqref{thm:ineq2}}. Let $d\in\{n+1,\ldots,2n-3\}$. We define $L$ as in \eqref{eq:L} with $n$ replaced by $2n-d-1$. We denote by $S^{k-1}_r\sub\R^k$ the sphere of radius $r>0$, around 0. Let $r<1$. The set 
\begin{equation}\label{eq:N sqrt}N\coloneqq\sqrt{\frac23}rL\times S^{2d-2n+1}_{\sqrt{1/3}r}
\end{equation}
is a closed regular coisotropic submanifold of $B^{2n}$, of dimension $d$. Each factor has area spectrum in linear space given by $\frac{\pi r^2}{3}\mathbb Z$. (For the second factor this follows e.g.~from the proof of \cite[Proposition 1.3]{ZiLeafwise}.) Hence Lemma \ref{le:productactionspectrum} implies that $A(\R^{2n},\om_0,N)=\frac{\pi r^2}{3}$. Lemma \ref{le:A N} implies that this number equals $A(B^{2n},\om_0,N)$. It follows that $A_\coiso^d(B^{2n},\om_0)\geq\frac{\pi r^2}{3}$, for every $r<1$. Inequality \eqref{thm:ineq2} follows. This proves Theorem \ref{thm:coiso}. 
\end{proof}

\begin{rmknonumber} The ratio of the scaling factors used in the definition (\ref{eq:N sqrt}) above is optimal. Namely, for $r,r'>0$ consider the coisotropic submanifold $N\coloneqq rL\times S^{2d-2n+1}_{r'}$ of $\R^{2n}$. It follows from Lemma \ref{le:productactionspectrum} that 
\begin{equation}\label{eq:A gcd}A(\R^{2n},\om_0,N)=\pi\gcd\left\{\frac{r^2}2,{r'}^2\right\}.
\end{equation} 
Here we define the greatest common divisor of two real numbers $a,b$ to be 
\[\gcd\{a,b\}:=\sup\big\{c\in(0,\infty)\,\big|\,a,b\in c\Z\big\}.\]
(Here our convention is that the supremum over the empty set equals 0.) In order for $N$ to be contained in $B^{2n}$, we need $r^2+{r'}^2<1$. For a given $c<1$, the expression (\ref{eq:A gcd}) is largest (namely equal to $\frac{c\pi}3$) under the restriction $r^2+{r'}^2=c$, provided that $\frac{r^2}2={r'}^2$. This corresponds to the choice in (\ref{eq:N sqrt}).
\end{rmknonumber} 
\subsection{Proof of Proposition \ref{prop:squeeze} (Two-dimensional squeezing)} We denote by $Y\sub\R^2$ the image of $X$ under the canonical projection from $\R^{2n}=\R^2\x\R^{2n-2}$ onto the first component. The 2-dimensional Hausdorff measure of $Y$ vanishes by a standard result. (See e.g.~\cite[p.~176]{Fe}.) Therefore, there exists an open neighborhood $U\sub\R^2$ of $Y$ of area less than $a$. By Lemma \ref{le:volume} there exists a symplectic embedding $\phi$ of $U$ into the open ball in $\R^2$, of area $a$. The product $U\x\R^{2n-2}$ is an open neighborhood of $X$, and $\phi\x\id$ is a symplectic embedding of this neighborhood into $Z^{2n}(a)$. This proves Proposition \ref{prop:squeeze}. $\Box$
\subsection*{Acknowledgments}
We would like to thank Felix Schlenk for making us aware of the Lagrangian submanifold considered by L.~Polterovich in \cite{Polterovich}. A considerable part of the work on this article was carried out while the second author visited the Max Planck Institute for Mathematics in Bonn. He would like to express his gratitude to the institute for the invitation and the generous fellowship. Both authors would also like to thank the Hausdorff Institute for Mathematics in Bonn, which hosted them during its Junior Hausdorff Trimester Program in Differential Geometry. We are also grateful to the anonymous referee for his/ her valuable suggestions.
\end{document}